\documentclass{article}
\usepackage{amssymb,amsmath,amsthm, mathrsfs}
\usepackage{graphicx, float,color}
\usepackage[encapsulated]{CJK}
\usepackage[english]{babel}
\usepackage[colorlinks=true]{hyperref}
\hypersetup{urlcolor=blue, linkcolor=red, citecolor=blue}
\usepackage{geometry} % to change the page dimensions
\usepackage{array} % for better arrays (eg matrices) in maths
\usepackage{paralist} % very flexible & customisable lists (eg. enumerate/itemize, etc.)
\usepackage{verbatim} % adds environment for commenting out blocks of text & for better verbatim
\usepackage[all]{xy}
\newtheorem{theorem}{Theorem}[section]

\newtheorem {proposition}[theorem]{Proposition}

\newtheorem {definition}[theorem]{Definition}
\newtheorem {example}[theorem]{Example}

\usepackage{sectsty}
\allsectionsfont{\sffamily\mdseries\upshape} % (See the fntguide.pdf for font help)

\title{Disjoint strong transitive operators on Hilbert $C^{\ast}$-modules}

\author{Song-Ung Ri, Hyon-Hui Ju*}

\date{}
\begin{document}
\maketitle
{\makeatletter\renewcommand*\@makefnmark{}\footnotetext

{*Corresponding author E-mail address:
hh.ju0524@ryongnamsan.edu.kp(Hyon-Hui Ju) }\makeatother}
	
\centerline{Faculty of Mathematics, {\bf Kim Il Sung} University,}
	
\centerline{Pyongyang, Democratic People's Republic of Korea}
%\selectlanguage{english} %%% remove comment delimiter ('%') and select language if required
	
\begin{abstract}
This paper is about disjoint $\mathcal{F}$-transitive operators on Hilbert $C^{\ast}$-modules, where $\mathcal{F}$ is a finitely invariant Furstenberg family of subsets of $\mathbb{N}$.
We characterize disjoint $\mathcal{F}$-transitivity for generalized bilateral weighted shift operators on the standard Hilbert module over $C^{\ast}$-algebra of compact operators on the separable Hilbert space. Then we give a sufficient condition for these operators to be disjoint chaotic and give concrete examples. We also characterize disjoint $\mathcal{F}$-transitivity for generalized translation operators on the non-unitary $C^{\ast}$-algebra and provide some applications.		
\end{abstract}
	
\vskip0.6cm\noindent
{\bf Keywords} standard Hilbert modules, $C^{\ast}$-algebras, generallized shifts, tranlation operators, topological transitivity, $\mathcal{F}$-transitivity, disjoint topological transitivity, disjoint $\mathcal{F}$-transitivity.

\section{Introduction}
In this paper, we study disjoint $\mathcal{F}$-transitivity of the operators introduced by Ivkovi\'{c} \cite{I24}, where $\mathcal{F}$ is a finitely invariant Furstenberg family of subsets of $\mathbb{N}$.

In \cite{I24} the author characterized topological transitivity of the generalized bilateral weighted shift operators on the standard Hilbert module over $C^{\ast}$-algebra of compact operators on the separable Hilbert space and obtained a sufficient condition for chaos. And he characterized topological transitivity of the generalized translation operators on the non-unitary $C^{\ast}$-algebra.

Here, we discuss disjoint dynamical properties of the operators introduced in \cite{I24}.

Disjoint dynamics of linear operators have been studied a lot for the last few decades after Bernal \cite{B07} and B\`{e}s and Peris \cite{BP07} introduced the notion of disjoint hypercyclicity independently for the first time in 2007(see \cite{B07}, \cite{BMS14}, \cite{BP07}, \cite{CK23}, \cite{CKP18}, \cite{COT20}, \cite{IT23dis}, \cite{KBA22}).
Beranl \cite{B07} provided two criteria for disjoint hypercyclicity of the operators acting on topological vector spaces and B\`{e}s and Peris \cite{BP07} provided the Disjoint Hypercyclicity Criterion for operators acting on a Fr\'{e}chet space.
In 2014, B\`{e}s, Martin and Sanders \cite{BMS14} studied disjoint hypercyclicity of weighted shifts and they showed that any finite collections of disjoint hypercyclic shifts never satisfy the disjoint hypercyclicity criterion.

In 2023 Chen and Kosti\'{c} \cite{CK23} characterized disjoint topological transitivity and disjoint chaos for weighted translations on Orlicz spaces.
In 2023 Ivkovi\'{c} and Tabatabaie \cite{IT23dis} characterized disjoint topological transitivity for elementary operators.

In \cite{RJ26} we characterized disjoint topological transitivity of the generalized bilateral weighted shift operators on the standard Hilbert module over $C^{\ast}$-algebra of compact operators on the separable Hilbert space, which were introduced in \cite{I24}.

In \cite{KBA22}, the authors characterized disjoint $\mathcal{F}$-transitivity of composition operators on the space $H(\Omega)$, where $\mathcal{F}$ is a Furstenberg family of subsets of $\mathbb{N}$.

In this paper, we study disjoint $\mathcal{F}$-transitivity and disjoint chaos for the generalized bilateral weighted shift operators on the standard Hilbert module over $C^{\ast}$-algebra of compact operators on the separable Hilbert space and disjoint $\mathcal{F}$-transitivity for the generalized translation operators on the non-unitary $C^{\ast}$-algebra. 
This paper is organized as follows:
In Section 3.1 we provide a necessary and sufficient condition to be disjoint $\mathcal{F}$-transitive, where $\mathcal{F}$ is a finitely invariant Furstenberg family, and a sufficient condition to be disjoint chaotic for the generalized bilateral weighted shift operators on the standard Hilbert module over $C^{\ast}$-algebra of compact operators on the separable Hilbert space. Then we give an example for disjoint chaos and disjoint topologically mixing of these operators.
In Section 3.2 we provide a necessary and sufficient condition to be disjoint $\mathcal{F}$-transitive, where $\mathcal{F}$ is a finitely invariant Furstenberg family, for the generalized translation operators on the non-unitary $C^{\ast}$-algebra.

\section{Preliminaries}
Let $X$ denote a separable Banach space and $\mathcal{L}(X)$ be the algebra of continuous linear operators on $X$.
An operator $T$ in $\mathcal{L}(X)$ is called \textit{topologically transitive} (\textit{topologically mixing}) if for every pair of non-empty open sets $U, V\subset X$, the return set $N(U,V):=\{n\in\mathbb{N}: T^{n}U\cap V\neq \emptyset\}$ is infinite (cofinite).
We say that an operator $T$ in $\mathcal{L}(X)$ is \textit{hypercyclic} if there exists a vector $x \in X$, called a \textit{hypercyclic vector}, such that its orbit $ Orb(x, T) =\{T^{n}x:n\in {\mathbb N} \}$ is dense in $X$. This is equivalent to demanding the existence of a vector $x \in X$ such that for any non-empty open set $U \subset X$ the return set $N(x,U) := \{n \in {\mathbb{N}}:T^{n}x \in U \}$ is non-empty. According to Birkhoff transitivity theorem, an operator $T$ on a separable Banach space $X$ is hypercyclic if and only if it is topologically transitive (Theorem 1.16, \cite{GM11}). A hypercyclic operator $T$ in $\mathcal{L}(X)$ with a dense set of periodic points  is called \textit{chaotic}.

Disjointness in hypercyclicity was introduced independently by Bernal \cite{B07}, and B\`{e}s and Peris \cite{BP07} in 2007.
For $N\geq 2$, the operators $T_{1}$,...,$T_{N}\in \mathcal{L}(X)$ are called \textit{disjoint hypercyclic} if the direct sum $T_{1}\oplus \cdots \oplus T_{N}$ has a hypercyclic vector of the form $(x,...,x)\in X^{N}$. Such a vector $x\in X$ is called a \textit{disjoint hypercyclic vector} for $T_{1},...,T_{N}$. If the set of disjoint hypercyclic vectors is dense in $X$, then the operators are called \textit{densely disjoint hypercyclic}.

And for $N\geq 2$, operators $T_{1}$,...,$T_{N}\in \mathcal{L}(X)$ are called \textit{disjoint topologically transitive}(\textit{disjoint topologically mixing}) if for any non-empty open subsets $U, V_{1}, V_{2},...,V_{N}$ of $X$, the return set $N(U,V_{1}, ..., V_{N}) :=\{n\in\mathbb{N}: U\cap T^{-n}_{1}(V_{1})\cap\cdots\cap T^{-n}_{N}(V_{N})\neq \emptyset\}$ is infinite (cofinite).
The operators $T_{1}$,...,$T_{N}\in \mathcal{L}(X)$ are disjoint topologically transitive if and only if the set of disjoint hypercyclic vectors is dense in $X$ (Proposition 2.3, \cite{BP07}).

Disjointness in chaos was initially investigated in \cite{CKP18} and formulated in \cite{CK23}.

\begin{definition}\textnormal{\cite{CK23}
Let $X$ be a separable infinite-dimensional Banach space. For $N\geq2$, operators $T_{1}$, $T_{2}$, ... , $T_{N}$ on $X$ are \textit{disjoint chaotic} if they are disjoint topologically transitive and the set of periodic elements, denoted by $\mathcal{P}(T_{1}, T_{2}, \cdots, T_{N})=\{(x_{1},x_{2},\cdots,x_{N})\in X^{N}:\exists n\in\mathbb{N}, (T^{n}_{1}x_{1},T^{n}_{2}x_{2},\cdots,T^{n}_{N}x_{N})=(x_{1},x_{2},\cdots,x_{N})\}$, is dense in $X^{N}$.
}\end{definition}

From the definition of disjoint chaos, it is trivial to see that the operators $T_{1}$, $T_{2}$, ... , $T_{N}$ on $X$ are disjoint chaotic if and only if they are disjoint transitive and each of them is chaotic.

A collection $\mathcal{F}$ of subsets of $\mathbb{N}$ is called a \textit{Furtenberg family} if it is hereditary upward, that is, $A\in\mathcal{F}$ and $A\subset B$ imply $B\in \mathcal{F}$.
We say that a Furstenberg family is \textit{proper} if it is non-empty and does not contain the empty set.
And a Furstenberg family is called \textit{finitely invariant} if for any $A\in\mathcal{F}$ and all $n\geq 0$, $A\setminus [0,n]\in\mathcal{F}$.
The family of all sets containing infinitely (cofinitely) many positive integers, is denoted by $\mathcal{F}_{\rm inf}$ ($\mathcal{F}_{\rm cof}$).

Now we recall the notion of topological transitivity and disjoint topological transitivity via Furstenberg families.
\begin{definition}\textnormal{\cite{BMPP19}
Let $X$ be a separable infinite-dimensional Banach space and $\mathcal{F}$ be a proper Furstenberg family.
An  operator $T$ on $X$ is called \textit{$\mathcal{F}$-transitive} if for any non-empty open sets $U,V\subset X$, the return set $N_{T}(U,V):=\{n\in\mathbb{N}: T^{n}U\cap V\neq \emptyset\}\in \mathcal{F}$.
}\end{definition}

\begin{definition}\textnormal{ \cite{KBA22}
Let $X$ be a separable infinite-dimensional Banach space and $\mathcal{F}$ be a proper Furstenberg family.
For $N\geq 2$, operators $T_{1}$, $T_{2}$, ..., $T_{N}$ on $X$ is called \textit{disjoint $\mathcal{F}$-transitive} if for any non-empty open sets $U, V_{1}, V_{2},...,V_{N}$ of $X$ , the return set $N(U,V_{1}, ..., V_{N}) :=\{n\in\mathbb{N}: U\cap T^{-n}_{1}(V_{1})\cap\cdots\cap T^{-n}_{N}(V_{N})\neq \emptyset\}\in \mathcal{F}$
}\end{definition}

\section{Main results}
\subsection{Generalized bilateral weighted shift operators on the standard Hilbert $C^{\ast}$-modules}
In this subsection we study disjoint dynamics the generalized bilateral weighted shift operators on the standard right Hilbert module over  $C^{\ast}$-algebra of compact operaters on Hilbert space. Recall the definition of the generalized bilateral weighted shift operators in \cite{I24}.

Let $\mathcal{H}$ be a separable Hilbert space and $B(\mathcal{H})$ be the $C^{\ast}$-algebra of all bounded linear operators on $\mathcal{H}$. And let $\mathcal{A_{C}}:=B_{0}(\mathcal{H})$ be the $C^{\ast}$-algebra of all compact operators on $\mathcal{H}$. Assume that $W:=(W_{j})_{j\in\mathbb{Z}}$ is a uniformly bounded sequence of invertible operators in $B(\mathcal{H})$ such that the sequence $\{W^{-1}_{j}\}_{j\in\mathbb{Z}}$ is also uniformly bounded in $B(\mathcal{H})$ and $U$ is a unitary operator on $\mathcal{H}$. Generalized bilateral weighted shift operator $T_{U, W}$ on $\ell_{2}(\mathcal{A_{C}})$, the standard right Hilbert module over $\mathcal{A_{C}}$, is defined by
\[(T_{U,W}(x))_{n}=W_{n}x_{n-1}U\]
for all $n\in\mathbb{Z}$ and $x:=(x_{j})_{j\in\mathbb{Z}}\in\ell_{2}(\mathcal{A_{C}})$.

$T_{U,W}$ is invertible and its inverse $S_{U,W}$ is given by
\[(S_{U,W}(y))_{n}=W^{-1}_{n+1}y_{n+1}U^{\ast}\]
for all $n\in\mathbb{Z}$ and $y:=(y_{j})_{j\in\mathbb{Z}}\in\ell_{2}(\mathcal{A_{C}})$.
Then we can check that
\[(T^{n}_{U,W}(x))_{i}=W_{i}W_{i-1}...W_{i-n+1}x_{i-n}U^{n}\]
and
\[(S^{n}_{U,W}(y))_{i}=W^{-1}_{i+1}W^{-1}_{i+2}...W^{-1}_{i+n}y_{i+n}U^{\ast n}\]
for all $n\in\mathbb{N}$, $i\in\mathbb{Z}$ and $x:=(x_{i})_{i}$, $y:=(y_{i})_{i}\in \ell_{2}(\mathcal{A_{C}})$. For more details see \cite{I24} and the references therein.

Throughout this paper, we denote $[J]:=\{-J, -J+1,..., J\}$ for each $J\in\mathbb{N}$, $L_{m}:=\textrm{span}\{e_{-m}, e_{-m+1},..., e_{m}\}$ where $\{e_{i}\}_{i\in\mathbb{Z}}$ is an orthonormal basis for $\mathcal{H}$ and $P_{m}$ is the orthogonal projection onto $L_{m}$.

In the following theorem we give a necessary and sufficient condition for the generalized bilateral weighted shift operaters on $\ell_{2}(\mathcal{A_{C}})$ to be disjoint $\mathcal{F}$-transitive.
\begin{theorem}\label{the3.1}
Let $W^{(1)}$, $W^{(2)}$,...,$W^{(N)}$ be uniformly bounded sequences of invertible operators in $B(\mathcal{H})$ and $U^{(1)}$, $U^{(2)}$,...,$U^{(N)}$ be unitary operators in $B(\mathcal{H})$ such that for each $m\in \mathbb{N}$ there exists $N_{m}\in\mathbb{N}$ such that for each $n\geq N_{m}$, each pair of distinct $s,l\in \{1,2,,,N\}$
\[{U^{(s)}}^{n}{U^{(l)}}^{-n}(L_{m})\perp L_{m}\textrm{    }\textrm{    }\textrm{    }(\ast).\]
Then the following are equivalent.
	
$(1)$ The operators $T_{U^{(1)},W^{(1)}}$, $T_{U^{(2)},W^{(2)}}$,...,$T_{U^{(N)},W^{(N)}}$ are disjoint $\mathcal{F}$-transitive.
	
$(2)$ For every $J,m \in\mathbb{N}$, there exist sequences
$\{D^{(k)}_{j}\}^{\infty}_{k=1}$ and $\{G^{(k)}_{1,j}\}^{\infty}_{k=1}$,...,$\{G^{(k)}_{N,j}\}^{\infty}_{k=1}$, for all $j\in [J]$, satisfying;
for every $\varepsilon>0$ there exists a set $F_{\varepsilon}\in\mathcal{F}$ such that for any $k\in F_{\varepsilon}$ and any $j\in [J]$, $1\leq l\neq s \leq N$,
\[\|D^{(k)}_{j}-P_{m}\|<\varepsilon, \|G^{(k)}_{l,j}-P_{m}\|<\varepsilon,\]
\[\|W^{(l)}_{j+k}W^{(l)}_{j+k-1}\cdots W^{(l)}_{j+1}D^{(k)}_{j}\|<\varepsilon,
\|{W^{(l)}_{j-k+1}}^{-1}{W^{(l)}_{j-k+2}}^{-1}\cdots{W^{(l)}_{j}}^{-1}G^{(k)}_{l,j}\|<\varepsilon,\]
and
\[\|W^{(s)}_{j}\cdots W^{(s)}_{j-k+1}{W^{(l)}_{j-k+1}}^{-1}{W^{(l)}_{j-k+2}}^{-1}\cdots {W^{(l)}_{j}}^{-1}G^{(k)}_{l,j}\| <\varepsilon.\]
\end{theorem}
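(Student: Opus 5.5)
We follow the standard scheme for weighted shifts (as in \cite{I24} and in \cite{RJ26} for the case $\mathcal{F}=\mathcal{F}_{\rm inf}$), working with the dense set of finitely supported sequences whose entries are compressions of the form $P_m x P_m$. Write $T_l:=T_{U^{(l)},W^{(l)}}$ and $S_l:=T_l^{-1}$. Throughout, $\mathcal{F}$ is assumed finitely invariant, so that discarding finitely many indices from a set of $\mathcal{F}$ keeps it in $\mathcal{F}$. This is needed in both directions.

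\emph{$(1)\Rightarrow(2)$.} Fix $J,m$. For each $\varepsilon>0$ choose $\delta\le\varepsilon$ small. Let $U_0$ be the $\delta$-ball around the sequence $x$ with $x_j=P_m$ for $j\in[J]$ and $0$ otherwise. For each $l$, let $V_l$ be the $\delta$-ball around the sequence $y^{(l)}$ with $y^{(l)}_j=P_m$ for $j\in[J]$ and $0$ otherwise (taking $V_l$ equal for all $l$ suffices). By $(1)$, $F_\varepsilon:=N(U_0,V_1,\dots,V_N)\setminus[0,2J]\in\mathcal{F}$. For $k\in F_\varepsilon$ pick $z\in U_0$ with $T_l^k z\in V_l$ for all $l$, and define
\[
D^{(k)}_j:=z_j,\qquad G^{(k)}_{l,j}:=(T_l^k z)_j\,{U^{(l)}}^{\ast k}.
\]
Then $\|D^{(k)}_j-P_m\|<\delta$. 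Also $\|G^{(k)}_{l,j}-P_m\|$ is small up to the fact that ${U^{(l)}}^{\ast k}$ does not fix $P_m$. We handle this by placing the target as $P_m{U^{(l)}}^{k}$, with the choice depending on $k$, or by replacing $V_l$ by a target depending on $k$. Because we only need the estimates on an $\mathcal{F}$-set, we should instead fix targets independent of $k$ and absorb the unitary, using that multiplication by a unitary is isometric.

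The norm estimates then follow by reading off coordinates. The $(j+k)$-th coordinate of $T_l^kz$ is $W^{(l)}_{j+k}\cdots W^{(l)}_{j+1}z_j{U^{(l)}}^k$, and it must be small since $j+k\notin[J]$. This gives the third inequality. The identity $z=S_l^kT_l^kz$ shows that $S_l^k$ applied to the $[J]$-part of $T_l^kz$ is close to $z_{j-k}$, which is small since $j-k\notin[J]$. This gives the fourth inequality. For the cross term, apply $T_s^k$ to $z$: its coordinate $j$ is $W^{(s)}_j\cdots W^{(s)}_{j-k+1}z_{j-k}{U^{(s)}}^k$, it is close to $P_m$, and $z_{j-k}$ is close to $S_l^k$ of the $l$-th target. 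Here condition $(\ast)$ enters. The contribution ${U^{(l)}}^{-k}{U^{(s)}}^{k}$ makes the $L_m$-parts orthogonal, so the product $P_m\cdot(\ldots)\,{U^{(l)}}^{\ast k}{U^{(s)}}^{k}P_m$ vanishes. Combined with the uniform bounds on $W$ and $W^{-1}$ and $\|z_{j-k}\|$ small, this yields the fifth inequality after shrinking $\delta$.

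\emph{$(2)\Rightarrow(1)$.} By density, it suffices to take $U_0, V_l$ as balls around finitely supported $x,y^{(l)}$ supported in $[J]$, with entries of the form $P_m a P_m$. Given $\varepsilon$, take $F_\varepsilon$ from $(2)$, intersected with $[N_m,\infty)$ and $(2J,\infty)$; this stays in $\mathcal{F}$ by finite invariance. For $k$ in this set define
\[
z:=x^{(k)}+\sum_{l=1}^N S_l^k\,w^{(l,k)},
\]
where $x^{(k)}_j=D^{(k)}_j x_j$ and $w^{(l,k)}_j=G^{(k)}_{l,j}\,y^{(l)}_j$, with the unitary power inserted on the right appropriately. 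Then:
\begin{itemize}
\item $z$ is close to $x$ by the first and fourth inequalities.
\item $T_l^k z=T_l^kx^{(k)}+w^{(l,k)}+\sum_{s\ne l}T_l^kS_s^kw^{(s,k)}$.
\item The first term is small by the third inequality, and the second is close to $y^{(l)}$.
\item The cross terms are small by the fifth inequality together with $(\ast)$. This requires $k\ge N_m$ because the right factors ${U^{(s)}}^{-k}{U^{(l)}}^{k}$ meet $P_m$.
\end{itemize}
Hence $F_\varepsilon\cap(\cdots)\subset N(U_0,V_1,\dots,V_N)$, and the return set lies in $\mathcal{F}$ by heredity.

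\emph{Main obstacle.} The delicate step is the bookkeeping of the right unitary factors ${U^{(l)}}^{k}$ and showing that $(\ast)$ kills the mixed terms exactly where the fifth inequality alone does not apply. This is the point where compressing everything by $P_m$ on both sides is essential. We would first verify it for $N=2$, and only then generalize.
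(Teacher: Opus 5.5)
Your $(2)\Rightarrow(1)$ argument is essentially the paper's. The $(1)\Rightarrow(2)$ direction, however, has a genuine gap: with $G^{(k)}_{l,j}:=(T_l^kz)_j{U^{(l)}}^{\ast k}$ the required estimates are false, not merely hard to check. Since ${W^{(l)}_{j-k+1}}^{-1}\cdots{W^{(l)}_{j}}^{-1}(T_l^kz)_j=z_{j-k}{U^{(l)}}^{k}$, the fifth quantity is $\|W^{(s)}_j\cdots W^{(s)}_{j-k+1}z_{j-k}\|=\|(T_s^kz)_j{U^{(s)}}^{-k}\|=\|(T_s^kz)_j\|>1-\delta$, because $(T_s^kz)_j$ is $\delta$-close to $P_m$. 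Likewise $\|G^{(k)}_{l,j}-P_m\|$ is within $\delta$ of $\|P_m{U^{(l)}}^{k}-P_m\|$, which need not be small. The uniform bounds on $W$, $W^{-1}$ together with $\|z_{j-k}\|<\delta$ cannot rescue this. The factor $W^{(s)}_j\cdots W^{(s)}_{j-k+1}$ is a $k$-fold product whose norm can grow like $C^k$, and $\delta$ must be fixed before $k$ ranges over an infinite set in $\mathcal{F}$. Your suggested repairs also fail. The open sets $V_l$ must be fixed for the return set to lie in $\mathcal{F}$, so the target cannot be $P_m{U^{(l)}}^{k}$ varying with $k$, and ``absorbing the unitary'' does not produce a $G$ close to $P_m$.

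The missing idea is to truncate on the right by $P_m$ rather than by ${U^{(l)}}^{\ast k}$: take $G^{(k)}_{l,j}:=(T_l^kz)_jP_m$ (and $D^{(k)}_j:=z_jP_m$, or simply $z_j$). Then $\|G^{(k)}_{l,j}-P_m\|\le\|(T_l^kz)_j-P_m\|$, and the fourth quantity equals $\|z_{j-k}{U^{(l)}}^kP_m\|\le\|z_{j-k}\|$. For $k\ge N_m$ the fifth quantity satisfies
\[
\|W^{(s)}_j\cdots W^{(s)}_{j-k+1}{W^{(l)}_{j-k+1}}^{-1}\cdots{W^{(l)}_{j}}^{-1}G^{(k)}_{l,j}\|=\|\bigl((T_s^kz)_j-P_m\bigr){U^{(s)}}^{-k}{U^{(l)}}^{k}P_m\|\le\|(T_s^kz)_j-P_m\|.
\]
Here one subtracts $P_m{U^{(s)}}^{-k}{U^{(l)}}^{k}P_m$, which vanishes by the orthogonality hypothesis (in the form the paper uses). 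So in this direction you must also discard $k<N_m$, and this is the only place $(\ast)$ enters. In $(2)\Rightarrow(1)$ it is not needed. Indeed, $(T_l^kS_s^kw^{(s,k)})_j=W^{(l)}_j\cdots W^{(l)}_{j-k+1}{W^{(s)}_{j-k+1}}^{-1}\cdots{W^{(s)}_j}^{-1}G^{(k)}_{s,j}y^{(s)}_j{U^{(s)}}^{\ast k}{U^{(l)}}^{k}$, and the trailing unitaries are isometric, so the fifth inequality of $(2)$, with $l,s$ interchanged, bounds the cross terms directly. For the same reason no unitary should be inserted into $w^{(l,k)}$, or it would no longer be close to $y^{(l)}$. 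Finally, $(2)$ asks for sequences fixed before $\varepsilon$, whereas your $z$ depends on $\varepsilon$. Fix one near-optimal witness $z^{(k)}$ for each $k$ (the paper does this implicitly via $\mathcal{F}$-convergence) and use finite invariance.
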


\begin{proof}
	First we show $(1)\Rightarrow(2)$. Assume that the operators $T_{U^{(1)},W^{(1)}}$, $T_{U^{(2)},W^{(2)}}$,...,$T_{U^{(N)},W^{(N)}}$ are disjoint $\mathcal{F}$-transitive.
	Let $J,m\in\mathbb{N}$ and define $x=(x_{j})_{j}\in \ell_{2}{(\mathcal{A_{C}})}$ by $x_{j}:=P_{m}$ for all $j\in[J]$ and $x_{j}:=0$ for all $j\in \mathbb{Z}\setminus[J]$.
	Then there exists a sequence $\{y^{(k)}\}_{k}\subset \ell_{2}(\mathcal{A_{C}})$ such that
	\[(y^{(k)}, T^{k}_{U^{(1)},W^{(1)}}(y^{(k)}), ..., T^{k}_{U^{(N)},W^{(N)}}(y^{(k)}))\stackrel{\mathcal F}{\longrightarrow}(x,x, ..., x),\]
	as $n\rightarrow\infty$, i.e. for every $\varepsilon>0$, there exists a set $F_{\varepsilon}\in\mathcal{F}$ such that for any $k\in F_{\varepsilon}$, $\|y^{(k)}-x\|\ <\varepsilon$ and $\|T^{k}_{U^{(l)},W^{(l)}}(y^{(k)})-x\|<\varepsilon$ for each $l\in \{1,2,...,N\}$.
	
	Since $\mathcal{F}$ is finitely invariant, we may assume that for every $k\in F_{\varepsilon}$, $k>2J$.
	
	Set
	\[D^{(k)}_{j}:=y^{(k)}_{j}P_{m}, G^{(k)}_{l,j}:=W^{(l)}_{j}W^{(l)}_{j-1}...W^{(l)}_{j-k+1}y^{(k)}_{j-k}{U^{(l)}}^{k}P_{m}\]
	for each $j\in[J]$ and $l\in \{1,2,...,N\}$. 
	Then we have for $j\in[J]$ and $l\in\{1,2,...,N\}$,
	
	\begin{eqnarray}\begin{split}\nonumber
	\|D^{(k)}_{j}-P_{m}\|
	&=\|y^{(k)}_{j}P_{m}-P_{m}\|\leq \|y^{(k)}_{j}-P_{m}\|\|P_{m}\|
	\\
	&\leq\|y^{(k)}-x\|<\varepsilon,
	\end{split}\end{eqnarray}
	\begin{eqnarray}\begin{split}\nonumber
	\|G^{(k)}_{l,j}-P_{m}\|
	&=\|W^{(l)}_{j}W^{(l)}_{j-1}...W^{(l)}_{j-k+1}y^{(k)}_{j-k}{U^{(l)}}^{k}P_{m}-P_{m}\|
	\\
	&=\|(T^{k}_{U^{(l)},W^{(l)}}(y^{(k)}))_{j}P_{m}-P_{m}\|\leq \|(T^{k}_{U^{(l)},W^{(l)}}(y^{(k)}))_{j}-P_{m}\|
	\\
	&\leq\|T^{k}_{U^{(l)},W^{(l)}}(y^{(k)})-x\|<\varepsilon,
	\end{split}\end{eqnarray}
	\begin{eqnarray}\begin{split}\nonumber
	\|W^{(l)}_{j+k}W^{(l)}_{j+k-1}\cdots W^{(l)}_{j+1}D^{(k)}_{j}\|
	&=\|(T^{k}_{U^{(l)},W^{(l)}}(y^{(k)}))_{j+k}{U^{(l)}}^{-k}P_{m}\|
	\\
	&\leq\|(T^{k}_{U^{(l)},W^{(l)}}(y^{(k)}))_{j+k}\|
	\\
	&\leq\|T^{k}_{U^{(l)},W^{(l)}}(y^{(k)})-x\|<\varepsilon,
	\end{split}
	\end{eqnarray}
	\begin{eqnarray}\begin{split}\nonumber
	\|{W^{(l)}_{j-k+1}}^{-1}{W^{(l)}_{j-k+2}}^{-1}\cdots {W^{(l)}_{j}}^{-1}G^{(k)}_{l,j}\|
	&=\|y^{(k)}_{j-k}{U^{(l)}}^{k}P_{m}\|\leq\|y^{(k)}_{j-k}\|
	\\
	&\leq \|y^{(k)}-x\|<\varepsilon.
	\end{split}
	\end{eqnarray}
	
	And since for each $k\geq N_{m}$ and each pair of distinct $s,l\in \{1,2,,,N\}$,  ${U^{(s)}}^{k}{U^{(l)}}^{-k}(L_{m})\perp L_{m}$, we have that for each pair of distinct $s,l\in \{1,2,,,N\}$,
	\[P_{m}{U^{(s)}}^{n}{U^{(l)}}^{-n}P_{m}=0.\]
	Then we have 
	\begin{eqnarray}\begin{split}\nonumber
	\|W^{(s)}_{j}&\cdots W^{(s)}_{j-k+1}{W^{(l)}_{j-k+1}}^{-1}{W^{(l)}_{j-k+2}}^{-1}\cdots
	{W^{(l)}_{j}}^{-1}G^{(k)}_{l,j}\|
	\\
	&=\|(T^{k}_{U^{(s)},W^{(s)}}(y^{(k)}))_{j} {U^{(s)}}^{-k}{U^{(l)}}^{k}P_{m}\|
	\\
	&=\|(T^{k}_{U^{(s)},W^{(s)}}(y^{(k)}))_{j} {U^{(s)}}^{k}{U^{(l)}}^{k}P_{m}-P_{m}{U^{(s)}}^{-k}{U^{(l)}}^{k}P_{m}\|
	\\
	&\leq\|(T^{k}_{U^{(s)},W^{(s)}}(y^{(k)}))_{j}-P_{m}\|\leq \|T^{k}_{U^{(s)},W^{(s)}}(y^{(k)})-x\|<\varepsilon
	\end{split}
	\end{eqnarray}
	which completes the proof of $(1)\Rightarrow(2)$.
	
	Next, we show $(2)\Rightarrow(1)$. Let $O, V_{1}, V_{2},..., V_{N}$ be non-empty open subsets of $\ell_{2}(\mathcal{A_{C}})$. Assume that $F$ denotes the set of all elements $x=(x_{j})_{j}\in \ell_{2}(\mathcal{A_{C}})$ such that for some $J,m\in \mathbb{N}$, $x_{j}=P_{m}x_{j}$ for all $j\in [J]$ and $x_{j}=0$ for all $j\in \mathbb{Z}\setminus [J]$. Since $F$ is dense in $\ell_{2}(\mathcal{A_{C}})$, we can find some $x=(x_{j})_{j}\in O$ and $y^{(1)}=(y^{(1)}_{j})_{j}\in V_{1}$, . . . , $y^{(N)}=(y^{(N)}_{j})_{j}\in V_{N}$ and sufficiently large $J, m\in\mathbb{N}$ such that $x_{j}=P_{m}x_{j}$ and $y^{(l)}_{j}=P_{m}y^{(l)}_{j}$ for all $l\in\{1,2,...,N\}, j\in[J]$ and $x_{j}=0$ and $y^{(l)}_{j}=0$ for all $l\in\{1,2,...,N\}, j\in \mathbb{Z}\setminus[J]$. Let the sequences $\{D^{(k)}_{j}\}^{\infty}_{k=1}$ and $\{G^{(k)}_{1,j}\}^{\infty}_{k=1}$,...,$\{G^{(k)}_{N,j}\}^{\infty}_{k=1}$ satisfy (ii) for these $J,m\in\mathbb{N}$. For each $k$, define $u^{(k)}, v^{(k)}_{1}, ..., v^{(k)}_{N}\in \ell_{2}(\mathcal{A_{C}})$ by $(u^{(k)})_{j}:=D^{(k)}_{j}x_{j}$ for $j\in[J]$, $(u^{(k)})_{j}:=0$ for $j\in \mathbb{Z}\setminus [J]$, for each $l\in\{1,2,...,N\}$, $(v^{(k)}_{l})_{j}:=G^{(k)}_{l, j}y^{(l)}_{j}$ for $j\in[J]$, $(v^{(k)}_{l})_{j}:=0$ for $j\in \mathbb{Z}\setminus [J]$.
	
	Set
	\[\varphi_{k}:=u^{(k)}+\sum^{N}_{l=1}S^{n_{k}}_{U^{(l)},W^{(l)}}v^{(k)}_{l}.\]
	Now it is sufficient to show that 
	\[(\varphi_{k}, T^{k}_{U^(1),W^(1)}(\varphi_{k}), ..., T^{k}_{U^{(N)},W^{(N)}}(\varphi_{k})) \stackrel{\mathcal F}{\longrightarrow}(x,y_{1}, ..., y_{N}) \textrm{ as } k\rightarrow\infty.\]
	Let $\sum_{j\in[J]}\|x_{j}\|+\sum^{N}_{l=1}\sum_{j\in[J]}\|y^{(l)}_{j}\|=c$.
	From the condition (2), for $\varepsilon>0$ there exists a set $F_{\varepsilon}\in\mathcal{F}$ such that for any $k\in F_{\varepsilon}$ and any $j\in [J]$, $1\leq l\neq s \leq N$,
	\[\|D^{(k)}_{j}-P_{m}\|<\varepsilon/c, \|G^{(k)}_{l,j}-P_{m}\|<\varepsilon/c,\]
	\[\|W^{(l)}_{j+k}W^{(l)}_{j+k-1}\cdots W^{(l)}_{j+1}D^{(k)}_{j}\|<\varepsilon/c,
	\|{W^{(l)}_{j-k+1}}^{-1}{W^{(l)}_{j-k+2}}^{-1}\cdots {W^{(l)}_{j}}^{-1}G^{(k)}_{l,j}\|<\varepsilon/c,\]
	and
	\[\|W^{(s)}_{j}\cdots W^{(s)}_{j-k+1}{W^{(l)}_{j-k+1}}^{-1}{W^{(l)}_{j-k+2}}^{-1}\cdots {W^{(l)}_{j}}^{-1}G^{(k)}_{l,j}\| <\varepsilon/c.\]
	Then
	\begin{eqnarray}\begin{split}\nonumber
	\|\varphi_{k}-x\|
	&\leq \|u^{(k)}-x\|+ \sum^{N}_{l=1}\|S^{k}_{U^{(l)},W^{(l)}}v^{(k)}_{l}\|
	\\
	&\leq\sum_{j\in\mathbb{Z}}\|u^{(k)}_{j}-x_{j}\|+ \sum^{N}_{l=1}\sum_{j\in\mathbb{Z}}\|(S^{k}_{U^{(l)},W^{(l)}}v^{(k)}_{l})_{j}\|
	\\
	&=\sum_{j\in[J]}\|u^{(k)}_{j}-x_{j}\|+ \sum^{N}_{l=1}\sum_{j\in[J]}\|(S^{k}_{U^{(l)},W^{(l)}}v^{(k)}_{l})_{j-k}\|
	\\
	&=\sum_{j\in[J]}\|D^{(k)}_{j}x_{j}-P_{m}x_{j}\|+ \sum^{N}_{l=1}\sum_{j\in[J]}\|{W^{(l)}_{j-k+1}}^{-1}\cdots {W^{(l)}_{j}}^{-1} G^{(k)}_{l,j}y^{(l)}_{j}\|
	\\
	&\leq \sum_{j\in[J]}\|D^{(k)}_{j}-P_{m}\|\|x_{j}\|+ \sum^{N}_{l=1}\sum_{j\in[J]}\|{W^{(l)}_{j-k+1}}^{-1}\cdots {W^{(l)}_{j}}^{-1} G^{(k)}_{l,j}\| \|y^{(l)}_{j}\|
	\\
	&<\varepsilon,
	\end{split}
	\end{eqnarray}
	\begin{eqnarray}\begin{split}\nonumber
	\|T^{k}_{U^{(l)},W^{(l)}}(\varphi_{k})-y^{(l)}\| &\leq\|T^{k}_{U^{(l)},W^{(l)}}(u^{(k)})\|+\|v^{(k)}_{l}-y^{(l)}\|
	\\
	& +\sum^{N}_{s\neq l, s=1} \|T^{k}_{U^{(l)},W^{(l)}}(S^{k}_{U^{(s)},W^{(s)}}(v^{(k)}_{s}))\|
	\\
	&\leq \sum_{j\in\mathbb{Z}}\|(T^{k}_{U^{(l)},W^{(l)}}(u^{(k)}))_{j}\|
	+\sum_{j\in\mathbb{Z}}\|(v^{(k)}_{l})_{j}-y^{(l)}_{j}\| 
	\\
	& +\sum^{N}_{s\neq l, s=1} \sum_{j\in\mathbb{Z}} \|(T^{k}_{U^{(l)},W^{(l)}}(S^{k}_{U^{(s)},W^{(s)}}(v^{(k)}_{s})))_{j}\|
	\\
	&=\sum_{j\in[J]}\|W^{(l)}_{j+k}W^{(l)}_{j+k-1}\cdots W^{(l)}_{j+1}D^{(k)}_{j}x_{j}\|+\sum_{j\in[J]}\|G^{(k)}_{l,j}y^{(l)}_{j}-P_{m}y^{(l)}_{j}\|
	\\
	& +\sum^{N}_{s\neq l, s=1} \sum_{j\in[J]} \|W^{(l)}_{j}\cdots W^{(l)}_{j-k+1}{W^{(s)}_{j-k+1}}^{-1}{W^{(s)}_{j-k+2}}^{-1}\cdots
	{W^{(s)}_{j}}^{-1} G^{(k)}_{s,j}y^{(s)}_{j}\|
	\\
	&\leq \sum_{j\in[J]}\|W^{(l)}_{j+k}W^{(l)}_{j+k-1}\cdots W^{(l)}_{j+1}D^{(k)}_{j}\|\|x_{j}\|+\sum_{j\in[J]}\|G^{(k)}_{l,j}-P_{m}\| \|y^{(l)}_{j}\|
	\\
	& +\sum^{N}_{s\neq l, s=1}\sum_{j\in[J]}\|W^{(l)}_{j}\cdots W^{(l)}_{j-k+1}{W^{(s)}_{j-k+1}}^{-1}{W^{(s)}_{j-k+2}}^{-1}\cdots
	{W^{(s)}_{j}}^{-1}G^{(k)}_{s,j}\|\|y^{(s)}_{j}\|
	\\
	&<\varepsilon.
	\end{split}
	\end{eqnarray}
	This concludes $(2)\Rightarrow(1)$.
\end{proof}

In Theorem \ref{the3.1} we get a necessary and sufficient condition for the operators $T_{U^{(1)},W^{(1)}}$, $T_{U^{(2)},W^{(2)}}$, ..., $T_{U^{(N)},W^{(N)}}$ to be disjoint topologically transitive in case that the finitely invariant Furstanberg family $\mathcal{F}$ is $\mathcal{F}_{\inf}$, which is the result of Theorem 3.4 in \cite{RJ26}. Furthermore, we get an equivalent condition for the operatrs $T_{U^{(1)},W^{(1)}}$, $T_{U^{(2)},W^{(2)}}$,...,$T_{U^{(N)},W^{(N)}}$ to be disjoint topologically mixing by taking the Fursatenberg family $\mathcal{F}$ by $\mathcal{F}_{\textnormal{cof}}$.  

The following proposition provides a sufficient condition for genralized bilateral wighted shift operators to be disjoint chaotic.

\begin{proposition}
Let $W^{(1)}$, $W^{(2)}$,...,$W^{(N)}$ be uniformly bounded sequences of invertible operators in $B(\mathcal{H})$ and $U^{(1)}$, $U^{(2)}$,...,$U^{(N)}$ be unitary operators in $B(\mathcal{H})$ such that for each $m\in \mathbb{N}$ there exists an $N_{m}\in\mathbb{N}$ such that for each $n\geq N_{m}$, each pair of distinct $s,l\in \{1,2,,,N\}$
\[{U^{(s)}}^{n}{U^{(l)}}^{-n}(L_{m})\perp L_{m}\textrm{    }\textrm{    }\textrm{    }(\ast).\]
Then $(2)\Rightarrow(1)$.

$(1)$ The operators $T_{U^{(1)},W^{(1)}}$, $T_{U^{(2)},W^{(2)}}$,...,$T_{U^{(N)},W^{(N)}}$ are disjoint chaotic.

$(2)$ For every $J,m \in\mathbb{N}$, there exist a strictly increasing sequence $\{n_{k}\}_{k}\subset \mathbb{N}$ and a sequence
$\{D^{(k)}_{i}\}^{\infty}_{k=1}$ for all $i\in [J]$, satisfying;
for $j\in [J]$, $1\leq l\neq s \leq N$,
\[\lim_{k\rightarrow\infty}\|D^{(k)}_{j}-P_{m}\|=0,\]
\[\lim_{k\rightarrow\infty}\|W^{(s)}_{j}\cdots W^{(s)}_{j-n_{k}+1}{W^{(l)}_{j-n_{k}+1}}^{-1}{W^{(l)}_{j-n_{k}+2}}^{-1}\cdots {W^{(l)}_{j}}^{-1}D^{(k)}_{j}\|=0 \]

and

$(i)$ $\sum^{\infty}_{t=1}\|W^{(i)}_{j+tn_{k}}\ldots W^{(i)}_{j+1}D^{(k)}_{j}\|^{2}$ converges for all $1\leq i\leq N$, $j\in[J]$ and $k\in \mathbb{N}$,

$(ii)$ $\sum^{\infty}_{t=1}\|{W^{(i)}_{j-tn_{k}+1}}^{-1}\ldots {W^{(i)}_{j}}^{-1}D^{(k)}_{j}\|^{2}$ converges for all $1\leq i\leq N$, $j\in[J]$ and $k\in \mathbb{N}$.
\end{proposition}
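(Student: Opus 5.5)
Since disjoint chaos is equivalent to disjoint topological transitivity together with chaos of each $T_{U^{(i)},W^{(i)}}$, the plan has two parts: show the operators are disjoint topologically transitive, and show that each $T_{U^{(i)},W^{(i)}}$ has a dense set of periodic points. Hypercyclicity of each operator is automatic, since disjoint hypercyclic vectors are hypercyclic for each $T_i$.

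\emph{Disjoint transitivity.} I will argue directly, in the style of the proof of $(2)\Rightarrow(1)$ in Theorem \ref{the3.1} with $\mathcal{F}=\mathcal{F}_{\rm inf}$, rather than verifying condition (2) of that theorem literally. Fix $J,m$, $x$, and $y^{(l)}$ supported in $[J]$ with entries in $P_m\mathcal{A_C}$. Put $u^{(k)}_j=D^{(k)}_jx_j$ and $v^{(k)}_{l,j}=D^{(k)}_jy^{(l)}_j$ for $j\in[J]$, and set $\varphi_k=u^{(k)}+\sum_l S^{n_k}_{U^{(l)},W^{(l)}}v^{(k)}_l$.

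The required estimates are as follows.
\begin{itemize}
\item $D^{(k)}_j\to P_m$ gives $u^{(k)}\to x$ and $v^{(k)}_l\to y^{(l)}$.
\item The cross terms $T^{n_k}_{(s)}S^{n_k}_{(l)}v^{(k)}_l$ have $j$-th entries $W^{(s)}_j\cdots W^{(s)}_{j-n_k+1}{W^{(l)}_{j-n_k+1}}^{-1}\cdots{W^{(l)}_j}^{-1}D^{(k)}_jy^{(l)}_jU^{(l)*n_k}U^{(s)n_k}$. These tend to $0$ by the second displayed limit.
\item $\|S^{n_k}_{(l)}v^{(k)}_l\|\to0$ and $\|T^{n_k}_{(l)}u^{(k)}\|\to 0$. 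The displayed hypotheses do not literally state these, so I derive them from (i) and (ii), which make the $t=1$ terms finite. Those terms are fixed numbers, however, not vanishing ones. Hence the main obstacle: I must extract decay. I would pass to a subsequence and redefine $n_k$ so that the $t=1$ terms become small. Convergence of $\sum_t$ for fixed $k$ makes the tail terms $\|W_{j+tn_k}\cdots W_{j+1}D^{(k)}_j\|$ tend to $0$ as $t\to\infty$. Using uniform boundedness of $W$ and $W^{-1}$, I replace $n_k$ by $t_kn_k$ with $t_k$ large, choosing $t_k$ simultaneously for all $i$ and $j$, and with $D^{(k)}_j$ kept close to $P_m$. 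For the cross-term limit to survive this change, I would need it along multiples of $n_k$ as well. If it does not, I would instead read the hypothesis as intended to include the analogue of the Theorem \ref{the3.1} conditions and invoke Theorem \ref{the3.1} directly.
\end{itemize}

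\emph{Periodic points.} For fixed $i$, $k$ and $z$ supported in $[J]$ with entries in $P_m\mathcal A_C$, define
\[
p=\sum_{t\in\mathbb{Z}} T^{tn_k}_{U^{(i)},W^{(i)}}\big((D^{(k)}_jz_j)_j\big),
\]
interpreting negative powers as powers of $S$. Conditions (i) and (ii) give norm convergence in $\ell_2(\mathcal A_C)$. Since the blocks have disjoint supports once $n_k>2J+1$, the norm is bounded by the square root of the sum of the squares; the factors $U^{tn_k}$ on the right are unitary and do not affect norms. By construction $T^{n_k}p=p$. Moreover $\|p-z\|\to0$ as $k\to\infty$, provided the tails $t\ne0$ are small. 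This again calls for making the $t$-sums small, which is done by the same passage to multiples $t_kn_k$ as above: the tail of a convergent series is small. Density of the finitely supported elements $F$ then gives density of the periodic points of each $T_{U^{(i)},W^{(i)}}$.

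The step I expect to be delicate is this uniform smallness of the series and the $t=1$ terms after passing to multiples of $n_k$. That is where the argument has to be organized with care.
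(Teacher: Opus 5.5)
You split the proof the same way the paper does: disjoint transitivity plus dense periodic points of each $T_{U^{(i)},W^{(i)}}$. The paper's proof just combines Proposition 3.1 and Theorem 3.4 of \cite{RJ26}, the latter being Theorem \ref{the3.1} with $\mathcal{F}=\mathcal{F}_{\rm inf}$. Your periodic-point half is fine. Passing to multiples is legitimate there, because $\sum_{t\ge1}a_{tT}\le\sum_{s\ge T}a_s$ is a tail of a convergent series.

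The genuine gap is the disjoint-transitivity half, exactly where you hesitate. To run the argument of Theorem \ref{the3.1}, $(2)\Rightarrow(1)$, with $G^{(k)}_{l,j}:=D^{(k)}_j$, you need
\[
\|W^{(i)}_{j+n_k}\cdots W^{(i)}_{j+1}D^{(k)}_j\|\to0,\qquad \|{W^{(i)}_{j-n_k+1}}^{-1}\cdots {W^{(i)}_{j}}^{-1}D^{(k)}_j\|\to0
\]
along the same $n_k$ on which the cross terms vanish. Condition (2) only says these are the $t=1$ terms of series that converge for each fixed $k$, which gives no control as $k\to\infty$.

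Replacing $n_k$ by $t_kn_k$ loses the cross hypothesis. Write $P^{(s)}_n(j)=W^{(s)}_j\cdots W^{(s)}_{j-n+1}$ and $X_n(i)=P^{(s)}_n(i)P^{(l)}_n(i)^{-1}$. Then $X_{2n}(j)=P^{(s)}_n(j)\,X_n(j-n)\,P^{(l)}_n(j)^{-1}$. This involves the cross operator at $j-n\notin[J]$, which is not controlled, sandwiched between uncontrolled factors, since the weights vary with the index and need not commute. Reading the missing decay into the hypothesis is a change of statement, not a proof step.

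The paper's one-line proof needs the same missing decay. Theorem 3.4 of \cite{RJ26} (Theorem \ref{the3.1} with $\mathcal{F}_{\rm inf}$) requires the forward and backward terms to be small at the same times as the cross terms. The citation only works if (i), (ii) are read as $\lim_{k\to\infty}\sum_{t\ge1}(\cdots)^2=0$.

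Under that reading your construction closes immediately:
\begin{itemize}
\item the $t=1$ terms vanish;
\item Theorem \ref{the3.1} with $\mathcal{F}_{\rm inf}$ applies with $G^{(k)}_{l,j}=D^{(k)}_j$;
\item $\sum_{t\in\mathbb Z}T^{tn_k}\big((D^{(k)}_jz_j)_j\big)\to z$ directly, without passing to multiples.
\end{itemize}
To get a complete proof, you must state that stronger hypothesis (or the two displayed limits) explicitly. The literal condition (2) does not supply it, and neither your argument nor the paper's provides it.
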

\begin{proof}
It is obvious from Proposition 3.1 and Theorem 3.4 in \cite{RJ26}.
\end{proof}
It is not difficult to find the generalized bilateral weighted shift operaters that are disjoint chaotic. In fact, the operators, which were provided in Example 3.6 of \cite{RJ26}, are disjoint chaotic and disjoint topologically mixing. For reader convenience, we will repeat it here and give a (short) proof for disjoint chaos. Before providing the example for disjoint chaos, we briefly summarize an example for chaos in the following. 

\begin{example}\label{ex3.1}\textnormal{
Let $\mathcal{H}$ be a separable Hilbert space and $(e_{j})_{j\in\mathbb{Z}}\subset\mathcal{H}$ be an orthonormal basis for $\mathcal{H}$. Let $\alpha>1$ be a real number. And $V$ is a bounded linear operator on $\mathcal{H}$ defined by}
\begin{displaymath}
V(e_{i})=\left\{ \begin{array}{ll}
\alpha e_{i+1}, & \textrm{for } i<0\\
\frac{1}{\alpha}e_{i+1}, & \textrm{for } i\geq 0.
\end{array} \right.
\end{displaymath}

\textnormal{
Since for any $n\geq 0$ and any $x=\sum_{i\in\mathbb{Z}}x_{i}e_{i}\in \mathcal{H}$,
\[V^{n}(x)=\sum_{i<-n}\alpha^{n}x_{i}e_{i+n}+\sum_{-n\leq i< 0}\alpha^{-n-2i}x_{i}e_{i+n}+\sum_{i\geq 0}\alpha^{-n}x_{i}e_{i+n},\]
we get that for $n\geq m$ and any $x=\sum_{i\in\mathbb{Z}}x_{i}e_{i}\in \mathcal{H}$,
\[\|V^{n}P_{m}(x)\|=\|\sum_{-m\leq i< 0}\alpha^{-n-2i}x_{i}e_{i+n}+\sum_{0\leq i\leq m}\alpha^{-n}x_{i}e_{i+n}\|,\]  and thus $\|V^{n}P_{m}\|=\alpha^{-n+2m}$.
Now, let us define $W_{j}=V$ for every $j\in \mathbb{Z}$. Then for any $k\in \mathbb{N}$ and any $m\in\mathbb{N}$, 
\[\sum^{\infty}_{t=1}\|W_{j+tk}W_{j+tk-1},...,W_{j+1}P_{m}\|^{2}= \sum^{\infty}_{t=1}\|V^{tk}P_{m}\|^{2}\]
\[=\sum^{m-1}_{t=1}\|V^{tk}P_{m}\|^{2}+\sum^{\infty}_{t=m}\|V^{tk}P_{m}\|^{2}\]
\[=\sum^{m-1}_{t=1}\|V^{tk}P_{m}\|^{2}+\sum^{\infty}_{t=m}\alpha^{-2tk+4m}.\]
Since $\alpha>1$, the above series converges.
As a similar way, we obtain that the series
$\sum^{\infty}_{t=1}\|W^{-1}_{j-tk+1}\ldots W^{-1}_{j}P_{m}\|^{2}$ also converges for any $k\in \mathbb{N}$. 
Therefore, the operator $T_{U,W}$ is chaotic from Proposition 3.1 of \cite{RJ26}.}
\end{example}

The above operator given in Example \ref{ex3.1} was already provided in \cite{RJ26} as an example for topologically mixing and Example \ref{ex3.1} showed that the operator is not only topologically mixing but also chaotic. 
Now we are ready to provide disjoint chaotic operators.

\begin{example}\textnormal{
Let $\mathcal{H}$ be a separable Hilbert space and $(e_{j})_{j\in\mathbb{Z}}\subset\mathcal{H}$ be an orthonormal basis for $\mathcal{H}$. From Example 1 in \cite{IT21}, we can take unitary operators $U^{(1)},U^{(2)}$ on $\mathcal{H}$ satisfying condition $(\ast)$.
And let $W_{1}$ and $W_{2}$ be bounded linear operators on $\mathcal{H}$ defined by}
\begin{displaymath}
W_{1}(e_{n})=\left\{ \begin{array}{ll}
2e_{n+1}, & \textrm{for } n<0,\\
\frac{1}{2}e_{n+1}, & \textrm{for } n\geq 0,
\end{array} \right.
\end{displaymath}
\begin{displaymath}
W_{2}(e_{n})=\left\{ \begin{array}{ll}
3e_{n+1}, & \textrm{for } n<0,\\
\frac{1}{3}e_{n+1}, & \textrm{for } n\geq 0.
\end{array} \right.
\end{displaymath}
\textnormal{
Now we define $W^{(1)}_{j}=W_{1}$, $W^{(2)}_{j}=W^{2}_{2}$ for every $j\in \mathbb{Z}$.
Then from Example 3.6 in \cite{RJ26}, the operators $T_{U^{(1)},W^{(1)}}$, $T_{U^{(2)},W^{(2)}}$ are disjoint topologically transitive. And from Example \ref{ex3.1} the operators $T_{U^{(1)},W^{(1)}}$, $T_{U^{(2)},W^{(2)}}$ are chaotic respectively.
This concludes $T_{U^{(1)},W^{(1)}}$, $T_{U^{(2)},W^{(2)}}$ are disjoint chaotic.
}
\end{example}

\subsection{Generalized translation operators on the $C^{\ast}$-algebra}
In this subsection we discuss the generalized translation operators on non-unitary $C^{\ast}$-algebras. For reader convenience, we first recall the definition of the generalized translation operator from \cite{I24}.
Let $\mathcal{A}$ be a non-unital $C^{\ast}$-algebra such that $\mathcal{A}$ is a closed two-sided ideal in a unital $C^{\ast}$-algebra $\mathcal{A}_{1}$. Let $\Phi$ be an isometric $\ast$-isomorphism of $\mathcal{A}_{1}$ such that $\Phi(\mathcal{A})=\mathcal{A}$. Assume that there exists a net $\{p_{\alpha}\}_{\alpha}\subset\mathcal{A}$ consisting of self-adjoint elements with $\|p_{\alpha}\|\leq 1$ for all $\alpha$ and such that $\{p^{2}_{\alpha}\}_{\alpha}$ is an approximate unit for $A$. Suppose in addition that for all $\alpha$ there exists some $N_{\alpha}\in\mathbb{N}$ such that $\Phi^{n}(p_{\alpha})p_{\alpha}=0$ for all $n\neq N_{\alpha}$. Let $b\in \mathcal{A}_{1}$ be an invertible element of $\mathcal{A}_{1}$ and $T_{\Phi, b}$ be the operator on $\mathcal{A}_{1}$ defined by 
\[T_{\Phi,b}(a)=b\Phi(a)\] 
for all $a\in \mathcal{A}_{1}$.

The inverse of $T_{\Phi,b}$, which we will denote by $S_{\Phi, b}$, is given by 
\[S_{\Phi,b}=\Phi^{-1}(b^{-1})\Phi^{-1}(a)\] 
for all $a\in\mathcal{A}_{1}$. 
Again, since $\Phi^{-1}(\mathcal{A})=\mathcal{A}$ and $\mathcal{A}$ is a two-sided
ideal in $\mathcal{A}_{1}$, we have that $S_{\Phi,b}(\mathcal{A})\subseteq\mathcal{A}$, hence $T_{\Phi,b}(\mathcal{A})=\mathcal{A}=S_{\Phi,b}(\mathcal{A})$.

In the following theorem we give a necessary and sufficient condition for generalized translation operators on the non-unitary $C^{\ast}$-algebra $\mathcal{A}$.

\begin{theorem}\label{the4.1}
Let $b_{1}$, $b_{2}$, . . . $b_{N}$ be invertible elements of $\mathcal{A}_{1}$ and $\Phi$ be an isometric $\ast$-isomorphism of $\mathcal{A}_{1}$. And let $\mathcal{F}$ be a finitely invariant Furstenberg family and $\{r_{k}\}^{N}_{k=1}\subset \mathbb{N}$ satisfy $0<r_{1}<r_{2}<...<r_{N}$. For $1\leq m\leq N$ we denote $T_{\Phi, b_{m}}:=T_{m}$. Then the following are equivalent.

$(1)$ $T^{r_{1}}_{1}$, $T^{r_{2}}_{2}$, ... ,$T^{r_{N}}_{N}$ are disjoint $\mathcal{F}$-transitive.

$(2)$ For every $p_{\alpha}$ there exist sequences $\{q_{n}\}_{n}$ and $\{d^{(1)}_{n}\}_{n}$, $\{d^{(2)}_{n}\}_{n}$, . . . , $\{d^{(N)}_{n}\}_{n}$ in $\mathcal{A}$ satisfying;
for every $\varepsilon>0$ there exists a set $F_{\varepsilon}\in\mathcal{F}$ such that  for any $n\in F_{\varepsilon}$ and

$(i)$ any $1\leq l\leq N$,
\[\|q_{n}-p^{2}_{\alpha}\|<\varepsilon, \|d^{(l)}_{n}-p^{2}_{\alpha}\|<\varepsilon,\]
\[\|\Phi^{-r_{l}n}(b_{l})\Phi^{-r_{l}n+1}(b_{l})\cdots \Phi^{-1}(b_{l})q_{n}\|<\varepsilon,\]
\[\|\Phi^{r_{l}n-1}(b^{-1}_{l})\Phi^{r_{l}n-2}(b^{-1}_{l})\cdots\Phi^{-1}(b^{-1}_{l})b^{-1}_{l}d^{(l)}_{n}\|<\varepsilon,\]
and

$(ii)$ any $1\leq l\neq s \leq N$,
\[\|\Phi^{r_{s}n-r_{l}n}(b_{l})\Phi^{r_{s}n-r_{l}n+1}(b_{l})\cdots\Phi^{r_{s}n-1}(b_{l})\Phi^{r_{s}n-1}(b^{-1}_{s})\Phi^{r_{s}n-2}(b^{-1}_{s})\cdots \Phi(b^{-1}_{s})b^{-1}_{s}d^{(s)}_{n}\|<\varepsilon.\]
\end{theorem}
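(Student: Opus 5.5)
The plan is to follow the scheme of the proof of Theorem \ref{the3.1}, with the orthogonality condition $(\ast)$ replaced by the hypothesis $\Phi^{n}(p_{\alpha})p_{\alpha}=0$ for $n$ large. The algebra rests on two iterate formulas for $T_{l}=T_{\Phi,b_{l}}$:
\[T^{n}_{l}(a)=b_{l}\Phi(b_{l})\cdots\Phi^{n-1}(b_{l})\Phi^{n}(a),\qquad S^{n}_{l}(a)=\Phi^{-1}(b^{-1}_{l})\Phi^{-2}(b^{-1}_{l})\cdots\Phi^{-n}(b^{-1}_{l})\Phi^{-n}(a).\]
Since $\Phi$ is an isometric $\ast$-isomorphism, we may apply $\Phi^{\pm k}$ inside norms freely. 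In this way each expression in $(2)$ becomes the norm of an orbit term:
\begin{itemize}
\item $\|\Phi^{-r_{l}n}(b_{l})\cdots\Phi^{-1}(b_{l})q_{n}\|=\|T^{r_{l}n}_{l}(q_{n})\|$;
\item the second displayed quantity in $(i)$ equals $\|S^{r_{l}n}_{l}(d^{(l)}_{n})\|$ after applying $\Phi^{-r_{l}n}$, possibly up to an index shift that I would fix by one careful computation;
\item the expression in $(ii)$ equals $\|T^{r_{l}n}_{l}S^{r_{s}n}_{s}(d^{(s)}_{n})\|$, again after applying a suitable power of $\Phi$.
\end{itemize}
I would verify these three identities first.

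For $(1)\Rightarrow(2)$, fix $p_{\alpha}$ and take $x=p^{2}_{\alpha}\in\mathcal{A}$. By disjoint $\mathcal{F}$-transitivity there are $y_{n}\in\mathcal{A}$ such that, for each $\varepsilon>0$, there is $F_{\varepsilon}\in\mathcal{F}$ with $\|y_{n}-p^{2}_{\alpha}\|<\varepsilon$ and $\|T^{r_{l}n}_{l}(y_{n})-p^{2}_{\alpha}\|<\varepsilon$ for all $l$ and all $n\in F_{\varepsilon}$. Because $\mathcal{F}$ is finitely invariant, we may also assume $r_{1}n$ exceeds the threshold beyond which $\Phi^{k}(p_{\alpha})p_{\alpha}=0$; this is where $r_{1}>0$ is used.

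Set $q_{n}:=y_{n}p^{2}_{\alpha}$ and $d^{(l)}_{n}:=T^{r_{l}n}_{l}(y_{n})\Phi^{r_{l}n}(p^{2}_{\alpha})$, the latter adjusted by the appropriate power of $\Phi$ so that its norm closeness to $p^{2}_{\alpha}$ is visible. The closeness estimates $\|q_{n}-p^{2}_{\alpha}\|<\varepsilon$ and $\|d^{(l)}_{n}-p^{2}_{\alpha}\|<\varepsilon$ then follow as in Theorem \ref{the3.1}, using $\|p_{\alpha}\|\le1$.

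For the smallness estimates, we have $T^{r_{l}n}_{l}(q_{n})=T^{r_{l}n}_{l}(y_{n})\Phi^{r_{l}n}(p^{2}_{\alpha})$, which is within $\varepsilon$ of $p^{2}_{\alpha}\Phi^{r_{l}n}(p^{2}_{\alpha})=0$. Similarly $S^{r_{l}n}_{l}(d^{(l)}_{n})$ reduces to $y_{n}p^{2}_{\alpha}$-type terms multiplied against $\Phi^{\pm r_{l}n}(p_{\alpha})$. For the cross term $(ii)$, $T^{r_{l}n}_{l}S^{r_{s}n}_{s}$ involves the shift $\Phi^{(r_{l}-r_{s})n}$. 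Here I use $r_{l}\neq r_{s}$: then $|r_{l}-r_{s}|n\to\infty$, so the vanishing $\Phi^{(r_{l}-r_{s})n}(p_{\alpha})p_{\alpha}=0$ kills the main term $p^{2}_{\alpha}\Phi^{(r_{l}-r_{s})n}(p^{2}_{\alpha})$. This plays the role of $(\ast)$ in Theorem \ref{the3.1}.

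For $(2)\Rightarrow(1)$, note that since $\{p^{2}_{\alpha}\}$ is an approximate unit, elements of the form $p^{2}_{\alpha}a$ (or $ap^{2}_{\alpha}$) are dense in $\mathcal{A}$. Given open sets $O,V_{1},\dots,V_{N}$, choose $x=ap^{2}_{\alpha}\in O$ and $y_{l}=c_{l}p^{2}_{\alpha}\in V_{l}$ with one common $\alpha$. Then set
\[\varphi_{n}:=aq_{n}+\sum_{l}S^{r_{l}n}_{l}\big(\Phi^{r_{l}n}(c_{l})\,d^{(l)}_{n}\big).\]
The coefficient is chosen so that $T^{r_{l}n}_{l}S^{r_{l}n}_{l}$ returns $c_{l}$ multiplied by $d^{(l)}_{n}$ on the correct side. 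This side bookkeeping, given that $T_{l}$ multiplies by $b_{l}$ on the left, is where care is needed, and it may force the placement $p^{2}_{\alpha}a$ instead.

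The estimates $\|\varphi_{n}-x\|$ and $\|T^{r_{l}n}_{l}\varphi_{n}-y_{l}\|$ then split into three kinds of terms. The terms of type $\|q_{n}-p^{2}_{\alpha}\|$ and $\|d^{(l)}_{n}-p^{2}_{\alpha}\|$ are controlled by $(i)$. The terms $T^{r_{l}n}_{l}(aq_{n})$ and $S^{r_{l}n}_{l}(\cdot)$ are controlled by $(i)$ via the identities above. The cross terms $T^{r_{l}n}_{l}S^{r_{s}n}_{s}(\cdot)$ with $s\neq l$ are controlled by $(ii)$. Each term carries a constant factor $\|a\|$ or $\|c_{l}\|$, so replacing $\varepsilon$ by $\varepsilon/c$ gives return set $\supseteq F_{\varepsilon/c}\in\mathcal{F}$.

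I expect the main obstacle to be the index and side bookkeeping. Concretely, this means checking that the exact products of $\Phi^{k}(b_{l})$ and $\Phi^{k}(b^{-1}_{s})$ appearing in $(i)$ and $(ii)$ match $T^{r_{l}n}_{l}$, $S^{r_{l}n}_{l}$ and $T^{r_{l}n}_{l}S^{r_{s}n}_{s}$ up to an isometric power of $\Phi$. It also means checking that the auxiliary elements $a$, $c_{l}$ can be moved past these products, which they can only from one side. I would first verify the case $N=1$ against Ivkovi\'{c}'s transitivity theorem in \cite{I24}.
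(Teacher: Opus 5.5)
Your overall architecture matches the paper's. You read the three quantities in (i), (ii) as $\|T^{r_ln}_l(q_n)\|$, $\|S^{r_ln}_l(d^{(l)}_n)\|$ and $\|T^{r_ln}_lS^{r_sn}_s(d^{(s)}_n)\|$, and these identities are correct. You cut off a near-orbit for $(1)\Rightarrow(2)$ and use $r_l\neq r_s$ for the cross terms, as the paper does.

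\textbf{The gap in $(1)\Rightarrow(2)$.} The concrete choices fail. With target $p^2_\alpha$ and $q_n:=y_np^2_\alpha$ you get $q_n\approx p^4_\alpha$, and $p^4_\alpha\neq p^2_\alpha$ in general. Since $p^2_\alpha$ is not a projection, ``as in Theorem~\ref{the3.1}'' (which relied on $P_m^2=P_m$) does not transfer. Your $d^{(l)}_n=T^{r_ln}_l(y_n)\Phi^{r_ln}(p^2_\alpha)$ is worse: it is literally $T^{r_ln}_l(q_n)$, which you then show has norm $<\varepsilon$. So $d^{(l)}_n\approx 0$, and since $\Phi$ is isometric, no power of $\Phi$ brings it near $p^2_\alpha$.

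The missing idea is to approximate $p_\alpha$ rather than $p^2_\alpha$, and to cut off by an unshifted $p_\alpha$. Choose $a_n$ with $a_n\approx p_\alpha$ and $T^{r_ln}_l(a_n)\approx p_\alpha$, and put $q_n:=a_np_\alpha$, $d^{(l)}_n:=T^{r_ln}_l(a_n)p_\alpha$; this is the paper's construction. Then $q_n,d^{(l)}_n\approx p_\alpha\cdot p_\alpha=p^2_\alpha$. Moreover $T^{r_ln}_l(q_n)=(T^{r_ln}_l(a_n)-p_\alpha)\Phi^{r_ln}(p_\alpha)$, $S^{r_ln}_l(d^{(l)}_n)=(a_n-p_\alpha)\Phi^{-r_ln}(p_\alpha)$ and $T^{r_ln}_lS^{r_sn}_s(d^{(s)}_n)=(T^{r_ln}_l(a_n)-p_\alpha)\Phi^{(r_l-r_s)n}(p_\alpha)$ all have norm $<\varepsilon$. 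This holds because $p_\alpha\Phi^{k}(p_\alpha)=0$ for $|k|\ge N_\alpha$: take adjoints in $\Phi^k(p_\alpha)p_\alpha=0$, resp.\ apply $\Phi^{-k}$ to it. The factorization $p^2_\alpha=p_\alpha\cdot p_\alpha$ is exactly why the hypotheses concern a self-adjoint $p_\alpha$ whose square forms the approximate unit.

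\textbf{The gap in $(2)\Rightarrow(1)$.} Your displayed $\varphi_n$ is also wrong as written. Since $T^{r_ln}_lS^{r_ln}_l$ is the identity, the $l$-th summand returns $\Phi^{r_ln}(c_l)d^{(l)}_n\approx\Phi^{r_ln}(c_l)p^2_\alpha$, not $y_l=c_lp^2_\alpha$. In addition, with right-hand placement $a$ and $c_l$ are trapped between the $b$-products and $q_n$, $d^{(l)}_n$. For example, $\|T^{r_ln}_l(aq_n)\|=\|\Phi^{-r_ln}(b_l)\cdots\Phi^{-1}(b_l)\,a\,q_n\|$, which (i) does not control.

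The fallback you suspected is the right one, and it is what the paper does. Take $x=p^2_\alpha a$ and $y_l=p^2_\alpha c_l$ with a common $\alpha$, and set $\varphi_n:=q_na+\sum_lS^{r_ln}_l(d^{(l)}_nc_l)$, with no $\Phi$-coefficient. Since $T^k_l(ec)=T^k_l(e)\Phi^k(c)$ and $S^k_l(ec)=S^k_l(e)\Phi^{-k}(c)$, each error term is then a quantity from (i) or (ii) times $\|a\|$ or $\|c_l\|$. Your form $x=p^2_\alpha a$ is in fact more accurate than the paper's $x=p^2_\alpha x$, which a general approximate unit need not allow.
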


\begin{proof}
First we show $(i)\Rightarrow(ii)$. Sinc $T^{r_{1}}_{1}$, $T^{r_{2}}_{2}$,..., $T^{r_{N}}_{N}$ are disjoint $\mathcal{F}$-transitive, there exists a sequence $\{a_{n}\}_{n}\subset \mathcal{A}$ such that 
\[(a_{n}, T^{r_{1}n}_{1}(a_{n}),..., T^{r_{N}n}_{N}(a))\stackrel{\mathcal F}{\longrightarrow} (p_{\alpha}, p_{\alpha},..., p_{\alpha}), \textrm{ as } n\rightarrow \infty.\]
In other words, for every $\varepsilon>0$ there exists a set $F_{\varepsilon}\in \mathcal{F}$ such that for any $n\in F_{\varepsilon}$ and any $1\leq l\leq N$, $\|a_{n}-p_{\alpha}\|<\varepsilon$, $\|T^{r_{l}n}_{l}a_{n}-p_{\alpha}\|<\varepsilon$.
Since $\mathcal{F}$ is finitely invariant, we may assume that for any $n\in F_{\varepsilon}$ $n>N_{\alpha}$. 

Now for $1\leq l\leq N$, we define 
\[q_{n}:=a_{n}p_{\alpha}, d^{(l)}_{n}:=b_{l}\Phi(b_{l})\cdots\Phi^{r_{l}n-1}(b_{l})\Phi^{r_{l}n}(a_{n})p_{\alpha}.\]
Then the sequences $\{q_{n}\}_{n}$, $\{d^{(l)}_{n}\}_{n}\subset \mathcal{A}$, since $\mathcal{A}$ is an ideal of $\mathcal{A}_{1}$. For any $n\in F_{\varepsilon}$ and any $1\leq l, s\leq N, l\neq s$,
\[\|q_{n}-p^{2}_{\alpha}\|=\|a_{n}p_{\alpha}-p^{2}_{\alpha}\|\leq \|a_{n}-p_{\alpha}\|<\varepsilon,\]
\begin{eqnarray}\begin{split}\nonumber
\|d^{(l)}_{n}-p^{2}_{\alpha}\|&=\|b_{l}\Phi(b_{l})\cdot\Phi^{r_{l}n-1}(b_{l})\Phi^{r_{l}n}(a_{n})p_{\alpha}-p^{2}_{\alpha}\|
\\
&=\|T^{r_{l}n}_{l}(a_{n})p_{\alpha}-p^{2}_{\alpha}\|\leq \|T^{r_{l}n}_{l}(a_{n})-p_{\alpha}\|
&<\varepsilon,
\end{split}
\end{eqnarray}

\begin{eqnarray}\begin{split}\nonumber
\|\Phi^{-r_{l}n}(b_{l})\Phi^{-r_{l}n+1}(b_{l})\cdots \Phi^{-1}(b_{l})q_{n}\|&=\|\Phi^{-r_{l}n}(b_{l})\Phi^{-r_{l}n+1}(b_{l})\cdots \Phi^{-1}(b_{l})a_{n}p_{\alpha}\|
\\
&=\|\Phi^{-r_{l}n}(b_{l}\Phi(b_{l})\cdots \Phi^{r_{l}n-1}(b_{l})\Phi^{r_{l}n}(a_{n})-p_{\alpha})p_{\alpha}\|
\\
&\leq \|b_{l}\Phi(b_{l})\cdots \Phi^{r_{l}n-1}(b_{l})\Phi^{r_{l}n}(a_{n})-p_{\alpha}\|=\|T^{r_{l}n}_{l}(a_{n})-p_{\alpha}\|
\\
&<\varepsilon,
\end{split}
\end{eqnarray}

\begin{eqnarray}\begin{split}\nonumber
\|\Phi^{r_{l}n-1}(b^{-1}_{l})\Phi^{r_{l}n-2}(b^{-1}_{l})\cdots\Phi(b^{-1}_{l})b^{-1}_{l}d^{(l)}_{n}\|&=\|\Phi^{r_{l}n}(a_{n})p_{\alpha}\|=\|\Phi^{r_{l}n}(a_{n}-p_{\alpha})p_{\alpha}\|
\\
&=\|a_{n}-p_{\alpha}\|<\varepsilon,
\end{split}
\end{eqnarray}
\begin{eqnarray}\begin{split}\nonumber
\|\Phi^{r_{s}n-r_{l}n}(b_{l})&\Phi^{r_{s}n-r_{l}n+1}(b_{l})\cdots\Phi^{r_{s}n-1}(b_{l})\Phi^{r_{s}n-1}(b^{-1}_{s})\Phi^{r_{s}n-2}(b^{-1}_{s})\cdots \Phi(b^{-1}_{s})b^{-1}_{s}d^{(s)}_{n}\|
\\
&=\|\Phi^{r_{s}n-r_{l}n}(b_{l})\Phi^{r_{s}n-r_{l}n+1}(b_{l})\cdots\Phi^{r_{s}n-1}(b_{l})\Phi^{r_{s}n}(a_{n})p_{\alpha}\|
\\
&=\|\Phi^{r_{s}n-r_{l}n}(b_{l}\Phi(b_{l})\cdots\Phi^{r_{l}n-1}(b_{l})\Phi^{r_{l}n}(a_{n})\Phi^{r_{l}n-r_{s}n}(p_{\alpha}))\|
\\
&=\|b_{l}\Phi(b_{l})\cdots \Phi^{r_{l}n-1}(b_{l}) \Phi^{r_{l}n}(a_{n})\Phi^{r_{l}n-r_{s}n}(p_{\alpha})\|
\\
&=\|T^{r_{l}n}_{l}(a_{n})\Phi^{r_{l}n-r_{s}n}(p_{\alpha})\|
\\
&=\|T^{r_{l}n}_{l}(a_{n})\Phi^{r_{l}n-r_{s}n}(p_{\alpha})-p_{\alpha}\Phi^{r_{l}n-r_{s}n}(p_{\alpha})\|
\\
&\leq\|T^{r_{l}n}_{l}(a_{n})-p_{\alpha}\|
\\
&<\varepsilon,
\end{split}
\end{eqnarray}
which concludes $(i)\Rightarrow(ii)$.

Now we show $(ii)\Rightarrow(i)$. Let $O, V_{1}, ..., V_{N}$ be non-empty subsets of $\mathcal{A}$. Since $\{p^{2}_{\alpha}\}_{\alpha}$ is an approximate unit for $\mathcal{A}$, there exist $p_{\alpha}$ and $x\in O$, $y_{1}\in V_{1}$, . . . , $y_{N}\in V_{N}$ such that $x=p^{2}_{\alpha}x$, $y_{1}=p^{2}_{\alpha}y_{1}$, . . . , $y_{N}=p^{2}_{\alpha}y_{N}$. 
Let us denote $\|x\|+\|y_{1}\|+\cdots+\|y_{N}\|=c$.

Define 
\[x_{n}:=q_{n}x+\sum^{N}_{l=1}S^{r_{l}n}_{l}(d^{(l)}_{n}y_{l}).\]
Now it is sufficient to show that 
\[(x_{n}, T^{r_{1}n}_{1}(x_{n}), ..., T^{r_{N}n}_{N}(x_{N}))\stackrel{\mathcal F}{\longrightarrow} (x, y_{1}, ..., y_{N}), \textrm{ as } n\rightarrow \infty.\] 
From condition (ii), there exist sequences of elements of $\mathcal{A}$ $\{q_{n}\}_{n}$ and $\{d^{(1)}_{n}\}_{n}$, $\{d^{(2)}_{n}\}_{n}$, . . . , $\{d^{(N)}_{n}\}_{n}$  satisfying;

for every $\varepsilon>0$ there exists a set $F_{\varepsilon}\in\mathcal{F}$ such that for any $n\in F_{\varepsilon}$ and any $1\leq l\neq s \leq N$,
\[\|q_{n}-p^{2}_{\alpha}\|<\varepsilon/c, \|d^{(l)}_{n}-p^{2}_{\alpha}\|<\varepsilon/c,\]
\[\|\Phi^{-r_{l}n}(b_{l})\Phi^{-r_{l}n+1}(b_{l})\cdots \Phi^{-1}(b_{l})q_{n}\|<\varepsilon/c,\]
\[\|\Phi^{r_{l}n-1}(b^{-1}_{l})\Phi^{r_{l}n-2}(b^{-1}_{l})\cdots\Phi^{-1}(b^{-1}_{l})b^{-1}_{l}d^{(l)}_{n}\|<\varepsilon/c,\]
\[\|\Phi^{r_{s}n-r_{l}n}(b_{l})\Phi^{r_{s}n-r_{l}n+1}(b_{l})\cdots\Phi^{r_{s}n-1}(b_{l})\Phi^{r_{s}n-1}(b^{-1}_{s})\Phi^{r_{s}n-2}(b^{-1}_{s})\cdots \Phi(b^{-1}_{s})b^{-1}_{s}d^{(s)}_{n}\|<\varepsilon/c.\]
Then 
\begin{eqnarray}\begin{split}\nonumber
\|x_{n}-x\|&=\|q_{n}x+\sum^{N}_{l=1}S^{r_{l}n}_{l}(d^{(l)}_{n}y_{l})-x\|
\\
&\leq \|q_{n}x-p^{2}_{\alpha}x\|+\sum^{N}_{l=1}\|S^{r_{l}n}_{l}(d^{(l)}_{n}y_{l})\|
\\
&\leq\|q_{n}-p^{2}_{\alpha}\|\|x\|+\sum^{N}_{l=1}\|S^{r_{l}n}_{l}(d^{(l)}_{n}y_{l})\|
\\
&=\|q_{n}-p^{2}_{\alpha}\|\|x\|+\sum^{N}_{l=1}\|\Phi^{-1} (b^{-1}_{l})\cdots \Phi^{-r_{l}n} (b^{-1}_{l})\Phi^{-r_{l}n} (d^{(l)}_{n}y_{l})\|
\end{split}
\end{eqnarray}

\begin{eqnarray}\begin{split}\nonumber
&=\|q_{n}-p^{2}_{\alpha}\|\|x\|+\sum^{N}_{l=1}\|\Phi^{r_{l}n}( \Phi^{r_{l}n-1}(b^{-1}_{l})\cdots \Phi(b^{-1}_{l})b^{-1}_{l}d^{(l)}_{n}y_{l})\|
\\
&\leq\|q_{n}-p^{2}_{\alpha}\|\|x\|+\sum^{N}_{l=1}\| \Phi^{r_{l}n-1}(b^{-1}_{l})\cdots \Phi(b^{-1}_{l})b^{-1}_{l}d^{(l)}_{n}\|\|y_{l}\|
\\
&<\frac{\varepsilon}{c}\|x\|+\sum^{N}_{l=1}\frac{\varepsilon}{c}\|y_{l}\|
\\
&=\varepsilon.
\end{split}
\end{eqnarray}
And
\begin{eqnarray}\begin{split}\nonumber
&\|T^{r_{l}n}_{l}(x_{n})-y_{l}\|
=\|T^{r_{l}n}_{l}(q_{n}x)+ T^{r_{l}n}_{l}(\sum^{N}_{s=1}S^{r_{s}N}_{s}(d^{(s)}_{n}y_{s}))-y_{l}\|
\\
&\leq \|T^{r_{l}n}_{l}(q_{n}x)\|+\sum^{N}_{s=1, s\neq l}T^{r_{l}n}_{l}(S^{r_{s}N}_{s}(d^{(s)}_{n}y_{s}))
\\
&=\|\Phi^{r_{l}n}(\Phi^{-r_{l}n}(b_{l})\Phi^{-r_{l}n+1}(b_{l})\cdots \Phi^{-1}(b_{l})q_{n}x)\
\\
&+\sum^{N}_{s=1, s\neq l}\|b_{l}\Phi(b_{l})\cdots\Phi^{r_{l}n-1}(b_{l}) \Phi^{r_{l}n}(\Phi^{-1}(b^{-1}_{s})\cdots\Phi^{-r_{s}n}(b^{-1}_{s})\Phi^{-r_{s}n}(d^{(s)}_{n}y_{s}))\|
\\
&=\|\Phi^{-r_{l}n}(b_{l})\Phi^{-r_{l}n+1}(b_{l})\cdots \Phi^{-1}(b_{l})q_{n}x \|
\\
&+\sum^{N}_{s=1, s\neq l}\|\Phi^{r_{l}n-r_{s}n}(\Phi^{r_{s}n-r_{l}n}(b_{l})\Phi^{r_{s}n-r_{l}n+1}(b_{l})\cdots\Phi^{r_{s}n-1}(b_{l})\Phi^{r_{s}n-1}(b^{-1}_{s})\Phi^{r_{s}n-2}(b^{-1}_{s})\cdots \Phi(b^{-1}_{s})b^{-1}_{s}d^{(s)}_{n}y_{s})\|
\\
&=\|\Phi^{-r_{l}n}(b_{l})\Phi^{-r_{l}n+1}(b_{l})\cdots \Phi^{-1}(b_{l})q_{n}x \|
\\
&+\sum^{N}_{s=1, s\neq l}\|\Phi^{r_{s}n-r_{l}n}(b_{l})\Phi^{r_{s}n-r_{l}n+1}(b_{l})\cdots\Phi^{r_{s}n-1}(b_{l})\Phi^{r_{s}n-1}(b^{-1}_{s})\Phi^{r_{s}n-2}(b^{-1}_{s})\cdots \Phi(b^{-1}_{s})b^{-1}_{s}d^{(s)}_{n}y_{s}\|
\\
&=\|\Phi^{-r_{l}n}(b_{l})\Phi^{-r_{l}n+1}(b_{l})\cdots \Phi^{-1}(b_{l})q_{n}x \|
\\
&+\sum^{N}_{s=1, s\neq l}\|\Phi^{r_{s}n-r_{l}n}(b_{l})\Phi^{r_{s}n-r_{l}n+1}(b_{l})\cdots\Phi^{r_{s}n-1}(b_{l})\Phi^{r_{s}n-1}(b^{-1}_{s})\Phi^{r_{s}n-2}(b^{-1}_{s})\cdots \Phi(b^{-1}_{s})b^{-1}_{s}d^{(s)}_{n}\|\|y_{s}\|
\\
&<\frac{\varepsilon}{c}\|x\|+\sum^{N}_{s=1, s\neq l}\frac{\varepsilon}{c}\|y_{s}\|
\\
&\leq \varepsilon.
\end{split}
\end{eqnarray}
This completes the proof.
\end{proof}

Using Theorem \ref{the4.1}, we can obtain an equivalent condition for the operators $T^{r_{1}}_{\Phi,b_{1}}$, $T^{r_{2}}_{\Phi,b_{2}}$, ... ,$T^{r_{N}}_{\Phi,b_{N}}$ to be disjoint topologically transitive, and
to be disjoint topologically mixing by taking the finitely invariant Furstenberg family $\mathcal{F}$, as $\mathcal{F}_{\textnormal{inf}}$ and $\mathcal{F}_{\textnormal{cof}}$ respectively.

And the condition (i) of (2) in Theorem \ref{the4.1} gives us an equivalet condition for the operator $T_{\Phi,b}$ to be $\mathcal{F}$-transitive as following; 

\begin{proposition}
Let $b$ be an invertible element of $\mathcal{A}_{1}$ and $\Phi$ be an isometric $\ast$-isomorphism of $\mathcal{A}_{1}$. And let $\mathcal{F}$ be a finitely invariant Furstenberg family. Then the following are equivalent.

$(1)$ $T_{\Phi, b}$ is $\mathcal{F}$-transitive.

$(2)$ For every $p_{\alpha}$ there exist sequences $\{q_{n}\}_{n}$ and $\{d_{n}\}_{n}$ in $\mathcal{A}$ such that for every $\varepsilon>0$ there exists a set $F_{\varepsilon}\in\mathcal{F}$ such that  for any $n\in F_{\varepsilon}$,
\[\|q_{n}-p^{2}_{\alpha}\|<\varepsilon, \|d_{n}-p^{2}_{\alpha}\|<\varepsilon,\]
\[\|\Phi^{-n}(b)\Phi^{-n+1}(b)\cdots \Phi^{-1}(b)q_{n}\|<\varepsilon,\]
\[\|\Phi^{n-1}(b^{-1})\Phi^{n-2}(b^{-1})\cdots\Phi(b^{-1})b^{-1}d_{n}\|<\varepsilon.\]
\end{proposition}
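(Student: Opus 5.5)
This is the single-operator version of Theorem \ref{the4.1}: take $N=1$, $r_{1}=1$, $b_{1}=b$. With only one operator, the cross condition (ii) is vacuous and $T^{r_1}_1=T_{\Phi,b}$. Since Theorem \ref{the4.1} is stated for $N\geq 1$ operators only implicitly, I would not simply cite it. Instead I would repeat its argument with the cross terms deleted, and verify both directions directly.

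\emph{$(1)\Rightarrow(2)$.} Fix $p_{\alpha}$. Apply $\mathcal{F}$-transitivity to shrinking balls around $p_{\alpha}$. This gives elements $a_{n}\in\mathcal{A}$ such that, for each $\varepsilon>0$, the set of $n$ with $\|a_{n}-p_{\alpha}\|<\varepsilon$ and $\|T^{n}_{\Phi,b}a_{n}-p_{\alpha}\|<\varepsilon$ belongs to $\mathcal{F}$.

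Some care is needed in producing a single sequence $\{a_{n}\}$ valid for all $\varepsilon$. I would take $\varepsilon_{j}=1/j$ and let $A_{j}\in\mathcal{F}$ be the return set for the $1/j$-balls. These sets decrease, since smaller balls give smaller return sets. For $n\in A_{j}\setminus A_{j+1}$, choose $a_{n}$ witnessing the $1/j$ closeness. Then for each $\varepsilon$, the set $F_\varepsilon=A_{j}$ with $1/j<\varepsilon$ works. This is the main technical point that the paper glosses over.

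Then set
\[q_{n}=a_{n}p_{\alpha},\qquad d_{n}=b\Phi(b)\cdots\Phi^{n-1}(b)\Phi^{n}(a_{n})p_{\alpha}=T^{n}_{\Phi,b}(a_{n})p_{\alpha}.\]
Both lie in $\mathcal{A}$ because $\mathcal{A}$ is an ideal. The four estimates then follow exactly as in the proof of Theorem \ref{the4.1}. We have $\|q_{n}-p_{\alpha}^{2}\|\le\|a_{n}-p_{\alpha}\|$ since $\|p_{\alpha}\|\le1$, and likewise for $d_{n}$. Applying the isometry $\Phi^{n}$ turns $\Phi^{-n}(b)\cdots\Phi^{-1}(b)q_{n}$ into $T^{n}_{\Phi,b}(a_{n})\Phi^{n}(p_{\alpha})$. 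Since $n>N_{\alpha}$, which finite invariance lets us assume, we have $p_{\alpha}\Phi^{n}(p_{\alpha})=0$. This lets us subtract $p_{\alpha}\Phi^n(p_\alpha)$ and bound the norm by $\|T^{n}_{\Phi,b}a_{n}-p_{\alpha}\|$. Finally, $\Phi^{n-1}(b^{-1})\cdots b^{-1}d_{n}=\Phi^{n}(a_{n})p_{\alpha}=\Phi^{n}(a_{n}-p_{\alpha})p_{\alpha}$, again using $\Phi^{n}(p_{\alpha})p_{\alpha}=0$.

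\emph{$(2)\Rightarrow(1)$.} Given open $O,V$, use the approximate unit to choose $x\in O$, $y\in V$ and $p_{\alpha}$ with $x\approx p_{\alpha}^{2}x$ and $y\approx p_{\alpha}^{2}y$. Exact equality, as the paper writes, is not available in general, so I would replace $x,y$ by $p_\alpha^2x,p_\alpha^2y$, which still lie in $O$ and $V$ for suitable $\alpha$. Then define
\[x_{n}=q_{n}x+S^{n}_{\Phi,b}(d_{n}y).\]
Using that $\Phi$ is isometric, $\|x_{n}-x\|\le\|q_{n}-p_{\alpha}^{2}\|\|x\|+\|\Phi^{n-1}(b^{-1})\cdots b^{-1}d_{n}\|\|y\|$. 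Similarly, $\|T^{n}_{\Phi,b}x_{n}-y\|\le\|\Phi^{-n}(b)\cdots\Phi^{-1}(b)q_{n}\|\|x\|+\|d_{n}-p_{\alpha}^{2}\|\|y\|$. Hence for $n\in F_{\varepsilon/c}$ we get $x_{n}\in O$ and $T^{n}_{\Phi,b}x_{n}\in V$, so $N_{T}(O,V)\supseteq F_{\varepsilon/c}\in\mathcal{F}$. Upward heredity of $\mathcal{F}$ then finishes the proof.

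The main obstacle is the bookkeeping in $(1)\Rightarrow(2)$: extracting a single sequence from set-valued return information, as described above. The algebra itself is routine.
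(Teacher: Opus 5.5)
Your proposal follows the paper's route. The paper obtains this proposition as the case $N=1$, $r_1=1$ of the proof of Theorem \ref{the4.1}. Your four estimates in $(1)\Rightarrow(2)$ are correct; the identity $p_\alpha\Phi^n(p_\alpha)=0$ follows from the hypothesis by taking adjoints. You are also right that the paper glosses over two points. However, your repairs of both need a small adjustment.

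\textbf{The sequence $\{a_n\}$ in $(1)\Rightarrow(2)$.} Your construction leaves $a_n$ undefined for $n\in\bigcap_j A_j$. For such $n$, continuity of $T^n_{\Phi,b}$ gives $T^n_{\Phi,b}p_\alpha=p_\alpha$, so you can set $a_n=p_\alpha$. In fact, once you discard $n\le N_\alpha$ this case does not arise unless $p_\alpha=0$, because then $p_\alpha^2=T^n_{\Phi,b}(p_\alpha)p_\alpha=0$.

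\textbf{The approximate-unit step in $(2)\Rightarrow(1)$.} This is the more substantive slip. Replacing $x,y$ by $p_\alpha^2x,p_\alpha^2y$ does not make the new elements fixed by $p_\alpha^2$, since $p_\alpha^2$ need not be idempotent. So the identities $q_nx-x=(q_n-p_\alpha^2)x$ and $d_ny-y=(d_n-p_\alpha^2)y$, on which your two bounds rest, fail for the replaced $x,y$. The fix is to keep the original $x\in O$, $y\in V$ inside $x_n=q_nx+S^n_{\Phi,b}(d_ny)$ and use $p_\alpha^2x$, $p_\alpha^2y$ only as targets. Then
\[
x_n-p_\alpha^2x=(q_n-p_\alpha^2)x+S^n_{\Phi,b}(d_ny),
\qquad
T^n_{\Phi,b}x_n-p_\alpha^2y=T^n_{\Phi,b}(q_nx)+(d_n-p_\alpha^2)y,
\]
and your estimates hold verbatim. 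Choose $\delta>0$ and $\alpha$ so that the open $\delta$-balls about $p_\alpha^2x$ and $p_\alpha^2y$ lie in $O$ and $V$; this is possible since $p_\alpha^2x\to x$ and $p_\alpha^2y\to y$. Taking $n\in F_{\delta/c}$ then gives $x_n\in O$ and $T^n_{\Phi,b}x_n\in V$.
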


In \cite{IT21}, the authors characterized topological transitivity for translation operators $T_{U, W}$ (which is different from ours discussed in Section 3) on the $C^{\ast}$-algebra $B_{0}(\mathcal{H})$ of compact operators on the Hilbert space $\mathcal{H}$ and in \cite{IT23dis}, they characterized disjoint topological transitivity for the same operators. As mentioned in Example 4.3 of \cite{I24}, this operator can be represented as a generalized translation operator $T_{\Phi,b}$ on $C^{\ast}$-algebra $B_{0}(\mathcal{H})$ and Theorem \ref{the4.1} gives an equivalent condition for disjoint $\mathcal{F}$-transitivity. The following proposition characterize disjoint $\mathcal{F}$-transitivity for translation operators on the $C^{\ast}$-algebra $B_{0}(\mathcal{H})$ of compact operators on the Hilbert space $\mathcal{H}$.

\begin{proposition}
Let $\mathcal{H}$ be an Hilbert space and $W_{1}$, $W_{2}$, ..., $W_{N}$ be invertible operators on $\mathcal{H}$. And let $U$ be a unitary operator on $\mathcal{H}$ such that for every $k\in\mathbb{N}$ there exists $N_{k}$ with 
\[U^{n}(L_{m})\bot L_{m}, \textrm{ } n\geq N_{k}.\]
For $1\leq l\leq N$, let us define $T_{U, W_{l}}(F):=W_{l}FU$, for $F\in B_{0}(\mathcal{H})$. 
Assume that $\mathcal{F}$ is a finitely invariant Furstenberg family and $\{r_{k}\}^{N}_{k=1}\subset \mathbb{N}$ satisfies $0<r_{1}<r_{2}<...<r_{N}$.  Then the following are equivalent.

$(1)$ $T_{U, W_{1}}$, $T_{U, W_{2}}$, ..., $T_{U, W_{N}}$ are disjoint $\mathcal{F}$-transitive on $B_{0}(\mathcal{H})$.

$(2)$ For every $m \in\mathbb{N}$, there exist sequences
$\{D_{k}\}^{\infty}_{k=1}$ and $\{G^{(1)}_{k}\}^{\infty}_{k=1}$,...,$\{G^{(N)}_{k}\}^{\infty}_{k=1}\subset B_{0}(\mathcal{H})$ satisfying;

for every $\varepsilon>0$ there exists a set $F_{\varepsilon}\in\mathcal{F}$ such that  for any $n\in F_{\varepsilon}$ and any $1\leq l\neq s \leq N$,
\[\|D_{n}-P_{m}\|<\varepsilon, \|G^{(l)}_{n}-P_{m}\|<\varepsilon,\]
\[\|W^{r_{l}n}_{l}D_{n}\|<\varepsilon, \|W^{-r_{l}n}_{l}G^{(l)}_{n}\|<\varepsilon\]
and
\[\|W^{r_{l}n}_{l}W^{-r_{s}n}_{s}G^{(s)}_{n}\|<\varepsilon.\]
\end{proposition}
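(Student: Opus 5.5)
We prove both directions directly, following the scheme of Theorem \ref{the4.1} and using the explicit form of the iterates. Here
\[T^{k}_{U,W_{l}}(F)=W^{k}_{l}FU^{k}, \qquad T^{-k}_{U,W_{l}}(F)=W^{-k}_{l}FU^{-k}.\]
Note that the statement is really about $T^{r_{1}}_{U,W_{1}},\dots,T^{r_{N}}_{U,W_{N}}$, since the exponents $r_{l}n$ appear in (2). We read (1) in that sense, with return times $n$ and iterates $r_{l}n$. We also read the hypothesis on $U$ as: for each $m$ there is $N_{m}$ with $U^{n}(L_{m})\perp L_{m}$ for $n\geq N_{m}$. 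This gives $P_{m}U^{j}P_{m}=0$ for all $|j|\geq N_{m}$, because $U^{-j}=(U^{j})^{\ast}$.

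\textbf{(1)$\Rightarrow$(2).} Fix $m$ and apply disjoint $\mathcal{F}$-transitivity to small balls around $P_{m}$. This yields $A_{n}\in B_{0}(\mathcal{H})$ such that for every $\varepsilon>0$ there is $F_{\varepsilon}\in\mathcal{F}$ with, for $n\in F_{\varepsilon}$,
\[\|A_{n}-P_{m}\|<\varepsilon, \qquad \|W^{r_{l}n}_{l}A_{n}U^{r_{l}n}-P_{m}\|<\varepsilon \quad \text{for all } l.\]
By finite invariance we may also take $n\geq N_{m}$ for $n\in F_{\varepsilon}$. We then define
\[D_{n}:=A_{n}U^{r_{l}n}P_{m}U^{-r_{l}n}, \qquad G^{(l)}_{n}:=W^{r_{l}n}_{l}A_{n}U^{r_{l}n}P_{m}.\]
The definition of $D_{n}$ depends on $l$, so we need a single $D_{n}$; the fix is given in the next paragraph. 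The estimates are as follows.
\begin{itemize}
\item $\|G^{(l)}_{n}-P_{m}\|\leq\|W^{r_{l}n}_{l}A_{n}U^{r_{l}n}-P_{m}\|<\varepsilon$.
\item $W^{-r_{l}n}_{l}G^{(l)}_{n}=A_{n}U^{r_{l}n}P_{m}=(A_{n}-P_{m})U^{r_{l}n}P_{m}+P_{m}U^{r_{l}n}P_{m}$. The last term vanishes, so the norm is $<\varepsilon$.
\item $W^{r_{l}n}_{l}W^{-r_{s}n}_{s}G^{(s)}_{n}=W^{r_{l}n}_{l}A_{n}U^{r_{s}n}P_{m}$. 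We write this as
\[\bigl(W^{r_{l}n}_{l}A_{n}U^{r_{l}n}-P_{m}\bigr)U^{(r_{s}-r_{l})n}P_{m}+P_{m}U^{(r_{s}-r_{l})n}P_{m}.\]
Since $r_{s}\neq r_{l}$, we have $|r_{s}-r_{l}|n\geq n\geq N_{m}$, so the last term vanishes. This is exactly where the distinctness of the $r_{l}$ is used.
\end{itemize}

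\textbf{The main obstacle: the term $\|W^{r_{l}n}_{l}D_{n}\|$.} A single $D_{n}$ must work for all $l$ simultaneously, with $D_{n}\approx P_{m}$. The natural candidate $D_{n}:=A_{n}P_{m}$ gives $W^{r_{l}n}_{l}A_{n}P_{m}$. To control it, we write
\[W^{r_{l}n}_{l}A_{n}P_{m}=\bigl(W^{r_{l}n}_{l}A_{n}U^{r_{l}n}-P_{m}\bigr)U^{-r_{l}n}P_{m}+P_{m}U^{-r_{l}n}P_{m},\]
and the last term is $0$ since $r_{l}n\geq N_{m}$. So $D_{n}:=A_{n}P_{m}$ works uniformly in $l$, and $\|D_{n}-P_{m}\|\leq\|A_{n}-P_{m}\|$. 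The same orthogonality trick then handles every term of (2).

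\textbf{(2)$\Rightarrow$(1).} Given nonempty open $O,V_{1},\dots,V_{N}$, we use density of $\{P_{m}X P_{m}\}$ in $B_{0}(\mathcal{H})$ to pick $X\in O$ and $Y_{l}\in V_{l}$ with $X=P_{m}XP_{m}$ and $Y_{l}=P_{m}Y_{l}P_{m}$. We then set
\[\Phi_{n}:=D_{n}X+\sum_{l}T^{-r_{l}n}_{U,W_{l}}\bigl(G^{(l)}_{n}Y_{l}\bigr)=D_{n}X+\sum_{l}W^{-r_{l}n}_{l}G^{(l)}_{n}Y_{l}U^{-r_{l}n}.\]
Applying $T^{r_{l}n}_{U,W_{l}}$ gives
\[T^{r_{l}n}_{U,W_{l}}(\Phi_{n})=W^{r_{l}n}_{l}D_{n}XU^{r_{l}n}+G^{(l)}_{n}Y_{l}+\sum_{s\neq l}W^{r_{l}n}_{l}W^{-r_{s}n}_{s}G^{(s)}_{n}Y_{s}U^{(r_{l}-r_{s})n}.\]
The right factors $U^{k}$ are unitary, so each cross term is bounded by the corresponding norm in (2). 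With $c=\|X\|+\sum_{l}\|Y_{l}\|$ and tolerance $\varepsilon/c$, exactly as in Theorem \ref{the4.1}, we get $\|\Phi_{n}-X\|<\varepsilon$ and $\|T^{r_{l}n}_{U,W_{l}}(\Phi_{n})-Y_{l}\|<\varepsilon$ for $n\in F_{\varepsilon/c}$. Hence the return set contains $F_{\varepsilon/c}\in\mathcal{F}$, and by heredity it lies in $\mathcal{F}$.
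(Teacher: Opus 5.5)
Your proof is correct and is essentially the paper's argument. The paper obtains this proposition by writing $T_{U,W_l}=T_{\Phi,b_l}$ with $\Phi(F)=U^{*}FU$, $b_l=W_lU$ and $p_\alpha=P_m$, so that $\Phi^{n}(P_m)P_m=0$ is exactly $U^{n}(L_m)\perp L_m$, and then invoking Theorem \ref{the4.1}; your direct computation is that theorem's proof unwound in this concrete setting, with your $D_n=A_nP_m$, $G^{(l)}_n=W_l^{r_ln}A_nU^{r_ln}P_m$ and approximant $\Phi_n$ being precisely its $q_n$, $d^{(l)}_n$ and $x_n$, and your reading of (1) as concerning $T^{r_1}_{U,W_1},\dots,T^{r_N}_{U,W_N}$ is the one that makes the equivalence hold.
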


\end{document}